\documentclass[11pt,english]{article}

\usepackage[english]{babel}
\usepackage[utf8]{inputenc}
\usepackage[T1,OT1]{fontenc}
\usepackage{chngcntr}
\usepackage{amsmath,amssymb,amsthm,amsfonts,amsthm}
\usepackage{thmbox}
\usepackage{cases}
\usepackage{lipsum}
\usepackage{textcomp}
\usepackage{lmodern}
\usepackage{epsfig}
\usepackage[a4paper,textwidth=17cm,top=2.2cm,bottom=2.2cm]{geometry}
\usepackage[obeyspaces]{url}
\usepackage[a4paper,plainpages=false,hidelinks,bookmarks=false]{hyperref}
\usepackage{mathptmx}
\usepackage{verbatim}
\usepackage{stmaryrd}
\usepackage{calc}
\usepackage{fix-cm}
\usepackage{mdwlist}
\usepackage{enumerate}
\usepackage{enumitem}
\usepackage{floatflt}
\usepackage{mathrsfs}
\usepackage{caption}
\usepackage{subcaption}
\usepackage[affil-it]{authblk}
\theoremstyle{break}
\newtheorem[L,nocut]{defi}{Definition}
\newtheorem[L,nocut]{theo}{Theorem}
\newtheorem[L,nocut]{prop}{Proposition}
\newtheorem[L,nocut]{lemme}{Lemma}
\newtheorem[L,nocut]{cor}{Corollaire}
\newtheorem[L,nocut]{app}{Application}
\numberwithin{lemme}{section}
\numberwithin{prop}{section}
\numberwithin{theo}{section}

\newcounter{compteurEx}
\counterwithin*{compteurEx}{section}

\newcounter{compteurRem}
\newenvironment{remarque}
{\refstepcounter{compteurRem}
\textbf{Remark \arabic{section}.\arabic{compteurRem}: }}{ \par}
\counterwithin*{compteurRem}{section}

\renewcommand{\theequation}{\arabic{section}.\arabic{equation}}
\counterwithin*{equation}{section}

\setlength{\parindent}{15pt}
\makeatletter



\newcommand{\segment}[2]{\left[ #1,#2 \right]}

\newcommand{\intd}[2]{\left[ #1,#2 \right)}
\newcommand{\intgd}[2]{\left( #1,#2 \right)}
\newcommand{\entier}[2]{\llbracket #1,#2 \rrbracket}




\newcommand\cinfty[1]{\mathscr{C}^{\infty}\left(#1\right)}
\newcommand\fcttest[1]{\mathscr{C}^{\infty}_{0}\left(#1\right)}
\newcommand\lp[2]{\mathbf{L}^{#1}\left(#2\right)}

\newcommand\hp[2]{\mathbf{H}^{#1}\left(#2\right)}



\newcommand\sphere[1]{\mathbb{S}^{#1}}
\newcommand\domaine[1]{\mathscr{D}\left(#1\right)}

\newcommand{\abs}[1]{\left|#1\right|}
\newcommand{\scalaire}[2]{\left\langle #1,#2\right\rangle}
\newcommand{\norme}[1]{\left\|#1\right\|}










\newcommand\tq{,\quad}
\newcommand\vi{,\text{ }}
\newcommand\paren[1]{\left(#1\right)}

\makeatother

\begin{document}

\title{On a quantum Hamiltonian in a unitary magnetic field with axisymmetric potential}

\author{Paul Geniet \thanks{
Institut de Mathématiques de Bordeaux, UMR 5251 du CNRS, Université de Bordeaux 
351 cours de la Libération - F 33 405 TALENCE, 
E-mail adress: paul.geniet@math.u-bordeaux.fr}}
\date{\today}

\maketitle ~

\begin{abstract}
We study a magnetic Schrödinger Hamiltonian, with axisymmetric potential in any dimension. 
The associated magnetic field is unitary and non constant. 
The problem reduces to a 1D family of singular Sturm-Liouville operators on the half-line indexed by a quantum number. 
We study the associated band functions. 
They have finite limits that are the Landau levels. 
These limits play the role of thresholds in the spectrum of the Hamiltonian. 
We provide an asymptotic expansion of the band functions at infinity. 
Each Landau level concerns an infinity of band functions and each energy level is intersected by an infinity of band functions. 
We show that among the band functions that intersect a fixed energy level, the derivative can be arbitrary small. 
We apply this result to prove that even if they are localized in energy away from the thresholds, quantum states possess a bulk component. 
A similar result is also true in classical mechanics.
\end{abstract}
\tableofcontents
\renewcommand{\theequation}{\arabic{equation}}
\section*{Introduction}
\label{part_intro}
\subsection*{General context}
The motion of a spinless quantum particle in $\mathbb{R}^{n}$ is described by the spectral properties of the associated Hamiltonian. When the particle moves in a magnetic field, it is the magnetic Laplacian $\paren{-i\nabla-\mathbf{A}}^{2}$ acting on $\lp{2}{\mathbb{R}^{n}}$, where $\mathbf{A}$ is a magnetic potential.\par
One of the simplest example of a magnetic field is the constant one. 
In the case $n\in\left\{2,3\right\}$, this model has been studied from the beginning of quantum mechanics \cite{Lan77} 
and also more recently for the general case $n\geqslant 2$ \cite{Hel96,Dim01}.\par
The variations of a non constant field can induce transport properties for the particle. 
In this context, we focus on magnetic fields that are translationally invariant along one direction. 
For such fields, the Hamiltonian has a band structure and 
transport properties in the direction of invariance are linked to the study of band functions (also called dispersion curves) that are the eigenvalues of the fibered operators. 
Moreover, the propagation of the particle in this direction is determined by the derivatives of these band functions that play the role of group velocities \cite{Yaf08,Exn99}. \par 
In the case $n=2$, one of the studied models of this class is the Iwatsuka model \cite{Iwa85,Man97}. 
For $n=3$, similar models are the planar translationally invariant magnetic fields \cite{Yaf08,Rai08}. 
Let $\paren{r,\theta,z}$ denote the cylindrical coordinates of $\mathbb{R}^{3}$. 
The potential takes the form $\mathbf{A}\paren{r,\theta,z}=\paren{0,0,a\paren{r}}$, where $a:\mathbb{R}\rightarrow\mathbb{R}$ is the intensity of the potential. 
The associated magnetic field is therefore given by 
\begin{equation}
\label{forme_champ_mgt}
\mathbf{B}\paren{r,\theta,z}=a'\paren{r}\paren{-\sin\paren{\theta},\cos\paren{\theta},0}.
\end{equation}
Thus this field is planar and its norm is $\norme{\mathbf{B}\paren{r,\theta,z}}=\abs{a'\paren{r}}$. 
Moreover the associated field lines are circles contained in planes $\left\{z=\text{cst}\right\}$ with center on the invariant axis (see Figure \ref{figure_champ_magnetic}).\par
\begin{figure}[!htb]
\begin{center}
\begin{tabular}{c}
\includegraphics[width=0.3\columnwidth]{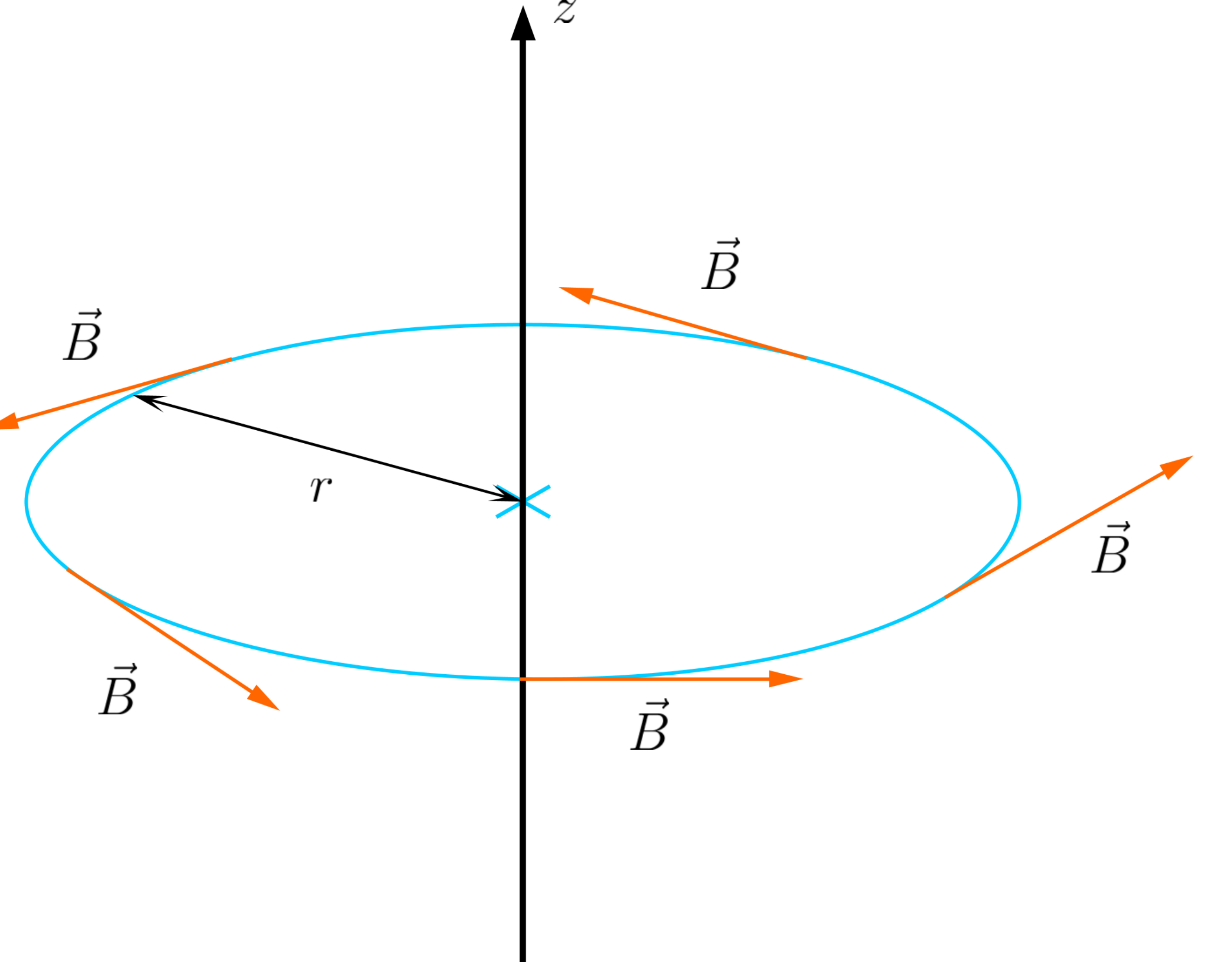}
\end{tabular}
\end{center}
\caption{Schematic of translationally invariant magnetic fields.}
\label{figure_champ_magnetic}
\end{figure}

In view of the form of the magnetic field \eqref{forme_champ_mgt}, two specific cases are relevant. 
The first model consists of a magnetic field generated by an infinite rectilinear wire bearing a constant current \cite{Yaf03,Bru15}. 
If we assume that the wire coincides with the $Oz$ axis, then the Biot \& Savard law states that the generated magnetic fields writes as the field \eqref{forme_champ_mgt}
for the intensity $a\paren{r}=\ln\paren{r}$. 
Here all the band functions are decreasing from $+\infty$ to $0$. 
Hence the spectrum of $H$ is $\sigma\paren{H}=\mathbb{R}_{+}$. 
The band functions tend exponentially to $0$ as the momentum in the $z$-direction tends to infinity and it provides a reaction of the ground state energy of $H$ under an electric perturbation \cite{Bru15}. 
Moreover the particle has a preferable direction of propagation along the $Oz$ axis \cite{Yaf03}.\par
It is also natural to consider the case of a unitary magnetic field. 
For the field \eqref{forme_champ_mgt}, it corresponds to the intensity $a\paren{r}=r$. 
In this case the band functions tend to finite limits that are the Landau levels as the momentum in the $z$-direction tends to infinity \cite[Proposition 3.6]{Yaf08}. 
Therefore the bottom of the spectrum of $H$ is positive. An approximated value has been calculated and used to 
compare the energy on a wedge in a magnetic model and the one coming from the regular part of the wedge \cite{Pop12,Pop15}.\par
In this article we continue to study this magnetic field in the case $a\paren{r}=r$ and we generalize the framework to any dimension $n\geqslant 3$. 
In particular we will show that the derivatives of the band functions possess a new type of behavior.\par
\subsection*{Spectral decomposition of the Hamiltonian and description of the model}
For every $x\in\mathbb{R}^{n}$, we set $r:=\norme{\paren{x_{1},\cdots,x_{n-1}}}_{2}$ 
and we define the magnetic potential $\mathbf{A}$ by
\begin{equation}
\label{forme_potentiel}
\mathbf{A}\paren{x_{1},\cdots,x_{n}}:=\paren{0,\cdots,0,r}.
\end{equation}
We define the Hamiltonian as the following operator, self-adjoint in $\lp{2}{\mathbb{R}^{n}}$: 
\begin{equation}
\label{operateur_H}
H:=\paren{-i\nabla - \mathbf{A}}^{2}.
\end{equation}\par
In order to define the magnetic field we consider, we identify this potential with the $1$-differential form $r\text{d}x_{n}$. 
We define the magnetic field $\mathbf{B}$ as $\mathbf{B}=\text{d}\mathbf{A}$. 
We calculate $B_{j,k}=\paren{\delta_{j,n}-\delta_{n,k}}x_{j}r^{-1}$, $\paren{i,j}\in\entier{1}{n}^{2}$. 
Therefore $\mathbf{B}$ is unitary since $2^{-1}\text{Tr}\paren{\mathbf{B}^{\ast}\mathbf{B}}=2^{-1}\text{Tr}^{+}\paren{\mathbf{B}}=1$ \cite[Section 1]{Hel96}.\par
After a partial Fourier transform in the $x_{n}$ variable, $H$ is unitarily equivalent to the direct integral in $\lp{2}{\mathbb{R}_{\xi};\lp{2}{\mathbb{R}^{n-1}}}$ 
of the family of operators $H\paren{\xi}$, self-adjoint in $\lp{2}{\mathbb{R}^{n-1}}$ and defined by
\begin{equation}
\label{defi_h_xi}
H\paren{\xi}:=-\Delta_{\mathbb{R}^{n-1}}+\paren{r-\xi}^{2}.
\end{equation}
Moreover as we will see in Section \ref{sect_oper_hami} for any frequency $\xi\in\mathbb{R}$, $H\paren{\xi}$ reduces to the orthogonal sum over 
$m\in\mathbb{Z}_{+}$ (called the magnetic quantum numbers) of operators $H_{m}\paren{\xi}$ self-adjoint in $\lp{2}{\mathbb{R}_{+};r^{n-1}dr}$ and defined by
\begin{equation*}
H_{m}\paren{\xi}:=-\frac{1}{r^{n-2}}\partial_{r}\paren{r^{n-2}\partial_{r}}+\frac{m\paren{m+n-3}}{r^{2}}+\paren{r-\xi}^{2}.
\end{equation*} \par
The spectrum of each $H_{m}\paren{\xi}$ is discrete (see Section \ref{sect_spe_ana}). 
Let $\lambda_{m,p}\paren{\xi}$, $p\in\mathbb{N}$ be the increasing sequence of its eigenvalues. 
The $\lambda_{m,p}$ are the band functions (also called dispersion curves).\par
We say that an operator $A$ is fibered \cite[Section XIII.16]{Ree78d} if it can be written as
\begin{equation*}
A:=\int\limits_{M}^{\oplus}{A\paren{\xi}d\xi},
\end{equation*}
with $\paren{M,d\xi}$ a $\sigma$-finite measure space. 
An important class of fibered operators is the one of analytically fibered operators introduced in \cite{Ger98a}. 
In this framework, $M$ is a real analytic manifold and some energy levels are particularly relevant \cite[Theorem 3.1 and Section 3]{Ger98a}. They form a discrete set and are referred as {\sffamily thresholds} \cite[Definition 3.9]{Ger98a}. 
Moreover away from them, some spectral results are rather standard. 
For example a limiting absorption principle as well as propagation estimates hold \cite[Theorem 3.3]{Ger98a} and it is tied to Mourre estimates. 
For a fibered operator $A$, we define the energy-momentum set $\Sigma$ as
\begin{equation*}
\Sigma:=\left\{
\paren{\lambda,\xi}\in\mathbb{R}\times M\tq\lambda\in\sigma\paren{A\paren{\xi}}
\right\}.
\end{equation*}
One of the necessary conditions for the operator $A$ to be analytically fibered in this sense is that 
the projection $\pi:\Sigma\rightarrow\mathbb{R}$ defined as $\pi\paren{\paren{\lambda,\xi}}=\lambda$ is proper. Finally, notice that if $M$ is a 1-dimensional manifold, then these thresholds correspond to the critical values of the band functions and can be referred to as attained thresholds \cite{Geu97,Hel01,Soc01,Bri09}.\par
Other examples of fibered magnetic models can be found in the literature, in dimension 2 \cite{Iwa85}, on the half-plane \cite{Bru14} or in dimension 3 \cite{Yaf08}. 
In these models, the considered Hamiltonian is also fibered along $\mathbb{R}$ and the band functions that are functions of $\xi\in\mathbb{R}$ tend to finite limits as $\xi\to+\infty$. 
The sets of frequencies associated with the energy levels concentrated in the neighborhood of these limits are unbounded. 
Hence the previous projection, $\pi$, is not proper. 
So these magnetic models are not contained in the class of analytically fibered operators that we described above. 
Nevertheless thresholds can still be defined as the limits of the band functions as $\xi\to+\infty$.\par
The model described in this article remains in this case. 
Indeed it is already known that the band functions tend to the Landau levels $E_{p}$ as $\xi\to+\infty$ \cite[Proposition 3.6]{Yaf08}. 
Our first goal is to precise the convergence of the band functions to these levels. 
To that aim we provide an asymptotic expansion for $\lambda_{m,p}\paren{\xi}$ as $\xi\to+\infty$ (see Theorem \ref{theo_asym_dvp}). 
The method used to prove this theorem is inspired by the method of quasi-modes \cite{Dim99} that has already been used in the proof of similar result \cite{Bru15,His16}.\par
For the previous magnetic models, some studies of classical spectral problems 
already exist \cite{Man97,Deb99,His15,His16,Pop16}. 
Our model contains one additional challenge. 
Actually for the Iwatsuka model and for the half-plane model, the thresholds are the limits at infinity of the band functions. 
Moreover, these band functions do not accumulate at any of these thresholds. 
On the contrary, in this article, each threshold $E_{p}$ is the limit of all the band functions 
$\lambda_{m,p}$ for $m\geqslant 0$ at infinity. 
Therefore any interval of energy $I\subset\sigma\paren{H}$ is intersected by an infinity of band functions (see equation \eqref{defi_xi_m}) and 
the set of frequencies $\left\{\lambda_{m,p}^{-1}\paren{I}\vi m\geqslant 0\vi p\in\mathbb{N}\right\}$ associated with $I$ (even if $\overline{I}$ is away from the Landau levels) is unbounded (see Proposition \ref{loc_fr}). 
Furthermore we will prove in Theorem \ref{theo_comp_asym_der} that even if $\overline{I}$ is away from the Landau levels, the supremum $\sup\paren{\lambda_{m,p}'\paren{\xi}\vi\lambda_{m,p}\paren{\xi}\in I}$ tends to $0$ as $m\to+\infty$. 
Therefore it is not clear at first sight that the Mourre estimates used in the case of the analytically fibered operators still hold. Indeed these estimates make use of the fact that away from the thresholds, the derivatives of the band functions are bounded from below by a positive constant \cite[formulas (3.3) to (3.5)]{Ger98a}. 
The proof of Theorem \ref{theo_comp_asym_der} uses a convenient formula for the derivative $\lambda_{m,p}'$ (see Proposition \ref{der_fin_fct_band}) which links this derivative to the normalized eigenfunctions of the operator $H_{m}\paren{\xi}$. 
This proof also uses the exponential decay of these eigenfunctions that is uniform with respect to $m$ and relies on Agmon estimates.\par
These properties have consequences for the transport properties associated with the magnetic field that we consider: define a position operator in the $x_{n}$-direction as the multiplier by the coordinate $x_{n}$. Moreover the time evolution of a quantum state $\varphi$ is given by the Schrödinger equation 
\begin{equation}
\label{eq_schrodinger}
i\partial_{t}\varphi = H\varphi
\end{equation} 
and therefore by the evolution group $e^{-itH}$. 
Combine this with the definition of $x_{n}$. We see with the identity \eqref{eq_just_time_evol_pos} that the position in the $x_{n}$-direction at time $t$ is given by the operator
\begin{equation*}
x_{n}\paren{t}:=e^{itH}x_{n}e^{-itH}.
\end{equation*}  
Define the velocity in the $x_{n}$-direction operator as the time derivative of $x_{n}\paren{t}$. 
This velocity operator has been studied for the Iwatsuka model \cite{Man97} or the 3D model \cite{Yaf08}. 
Let $J$ be the current operator defined as 
\begin{equation*}
\label{defi_vel_ope_intro}
J:=-i\left[H,x_{n}\right]
\end{equation*}
and define the current carried by a state $\varphi$ as $\scalaire{J\varphi}{\varphi}$ \cite{Ens83}. 
Note that (see formula \eqref{eq_der_pos_op}) the velocity in the $x_{n}$-direction is linked to $J$ as follow:
\begin{equation}
\partial_{t}x_{n}\paren{t}=-e^{itH}Je^{-itH}.
\end{equation}
Hence, if $J$ is bounded from below, then $\partial_{t} x_{n}\paren{t}$ is bounded from below.\par
Now let's see how the velocity operator is captured in similar magnetic models and how it is connected to the derivatives of the band functions. 
For the Iwatsuka model (resp. 3D model), the existence of an asymptotic velocity in the $y$-direction (resp. $z$-direction) as $t\to\infty$ has been proven \cite[Theorem 4.2]{Man97}, \cite[Theorem 5.1]{Yaf08}. 
Moreover in both case, the asymptotic velocity is constructed thank to estimates on the derivatives of the band functions \cite[Formula (4.2)]{Man97}, \cite[Formula 5.4]{Yaf08}.\par
For the model on the half-plane, the current operator has been studied \cite{His16}. 
The study distinguishes between two types of behavior: the edge states that carry a non zero current and their counterpart, the bulk states that carry an arbitrarily small one \cite{Hal82,Akk98}, \cite[Section 7]{Hor02}. 
One of the key argument for this study is the decomposition of the current operator thank to the derivatives of the band functions \cite[formula (1.10)]{His16}. In this framework any energy interval $I$ away from the thresholds is intersected by a finit number of band functions. Moreover the derivative of each band function is bounded from below by a positive constant on $I$. Hence the current operator is bounded from below on $I$. Therefore any quantum state localized in energy on $I$ carries a non trivial current \cite{Deb99,Fro00,His08}.
On the counterpart, if there is a threshold in $I$, then there is a band function that intersect $I$ with a arbitrarily small derivative. Hence one can see that the current operator is not bounded from below on $I$ \cite[Section 4]{His08}.\par
In section \ref{sec_ope_cour} we study the current operator associated with the operator \eqref{operateur_H}. 
First we will show that, the current operator is still linked to the multiplier by the family of the derivatives of the band functions (formula \eqref{app_fibr_cour}). 
So in Theorem \ref{theo_trans_courr}, we will apply Theorem \ref{theo_comp_asym_der} that states that for any energy interval $I$ (even if $I$ does not contain a Landau level), the family of the derivatives of the band functions that cross $I$ is not bounded from below on $I$ to see that the current operator is not bounded from below on $I$ either.\par
Finally, as a conclusion, according to Theorem \ref{theo_trans_courr}, the definition of ``thresholds'' as the Landau levels seems not to be relevant in this article: in the case of the model considered here, any quantum state, even localized in energy away from the Landau levels possesses a component with small current (see Theorem \ref{theo_trans_courr} and remark \ref{rem_faibl_courr}). We still denote it a bulk component by analogy with the previous model.\par
In classical mechanics, such a magnetic field also induces transport properties. 
Indeed a charged particle follows the Newton law $m\ddot{\mathbf{x}}=q\dot{\mathbf{x}}\wedge\mathbf{B}$. 
This equation can be integrated \cite[Section 4]{Yaf03} and we plotted the classical trajectories (Figure \ref{figure_traj_class}) in the case $a\paren{r}=r$. 
We can observe that the particle propagates in the $Oz$ direction and one can show that it has an effective velocity $v_{z}$ in this direction: there is a constant $v_{z}$ such that $z\paren{t}=v_{z}t+O\paren{1}$ \cite[Theorem 4.2]{Yaf03}. 
Denote by $\paren{r\paren{t},\theta\paren{t},z\paren{t}}$ the cylindrical coordinates of the particle at time $t$. One can see that $r$ is a periodic function of the time \cite[Formula (4.18)]{Yaf03}. Let $T$ be its period. 
Furthermore, denote by $\sigma:=r^{2}\dot{\theta}$ the areal velocity of the particle that is a constant fixed by the initial conditions \cite[Formula (4.13)]{Yaf03}. 
We deduce the following value for $v_{z}$ \cite[Formula (4.22)]{Yaf03}: 
\begin{equation}
\label{eq_al_v_z}
v_{z}=\frac{\sigma^{2}}{T}\int_{0}^{T}{\frac{dt}{r\paren{t}^{3}}}.
\end{equation}
Let $E$ be the total energy of the particle. Note that $E$ does not depend on time \cite[Formula (4.3)]{Yaf03}. 
Moreover one can see that $r^{2}\dot{\theta}^{2}\leqslant E$ \cite[Formula (4.12)]{Yaf03}. 
Combine it with the definition of $\sigma$ and with the relation \eqref{eq_al_v_z}. 
We get the estimate $\abs{v_{z}}\leqslant E^{3/2}\abs{\sigma}^{-1}$. 
In addition for $\paren{E,\sigma}\in\mathbb{R}_{+}\times \mathbb{R}$, with $\sigma\neq 0$, 
one can find initial conditions such that $E$ is the energy of the particle and $\sigma$ its areal velocity. 
Therefore one can find initial conditions such that $v_{z}$ is arbitrarily small, namely such that the particle propagates arbitrarily slowly along the $Oz$ axis.

\begin{figure}[!htb]
\begin{subfigure}{0.5\textwidth}
\includegraphics[width=0.8\columnwidth]{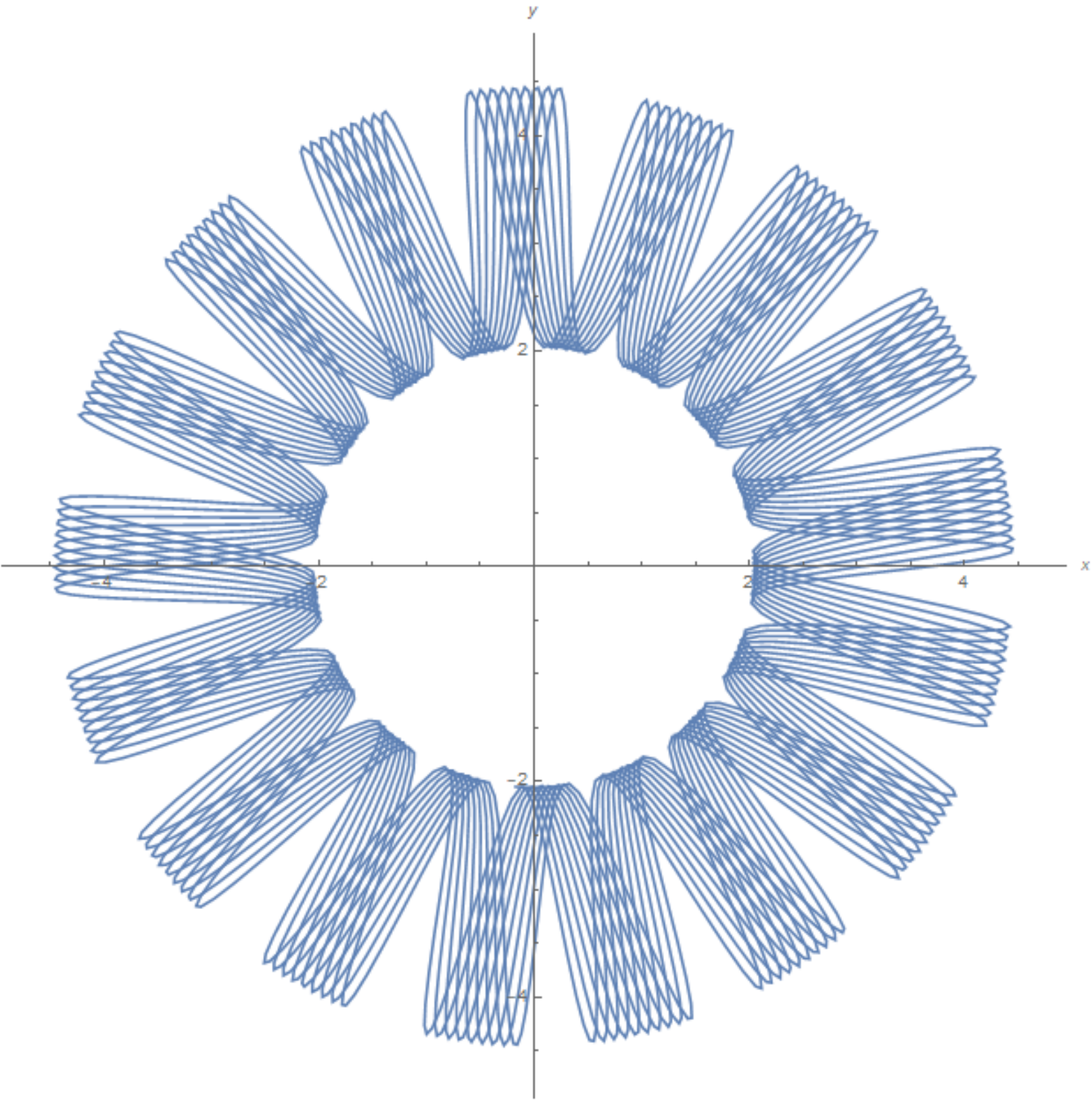}
\caption{Projection in the plan $xOy$.}
\end{subfigure}
\begin{subfigure}{0.5\textwidth}
\includegraphics[width=0.8\columnwidth]{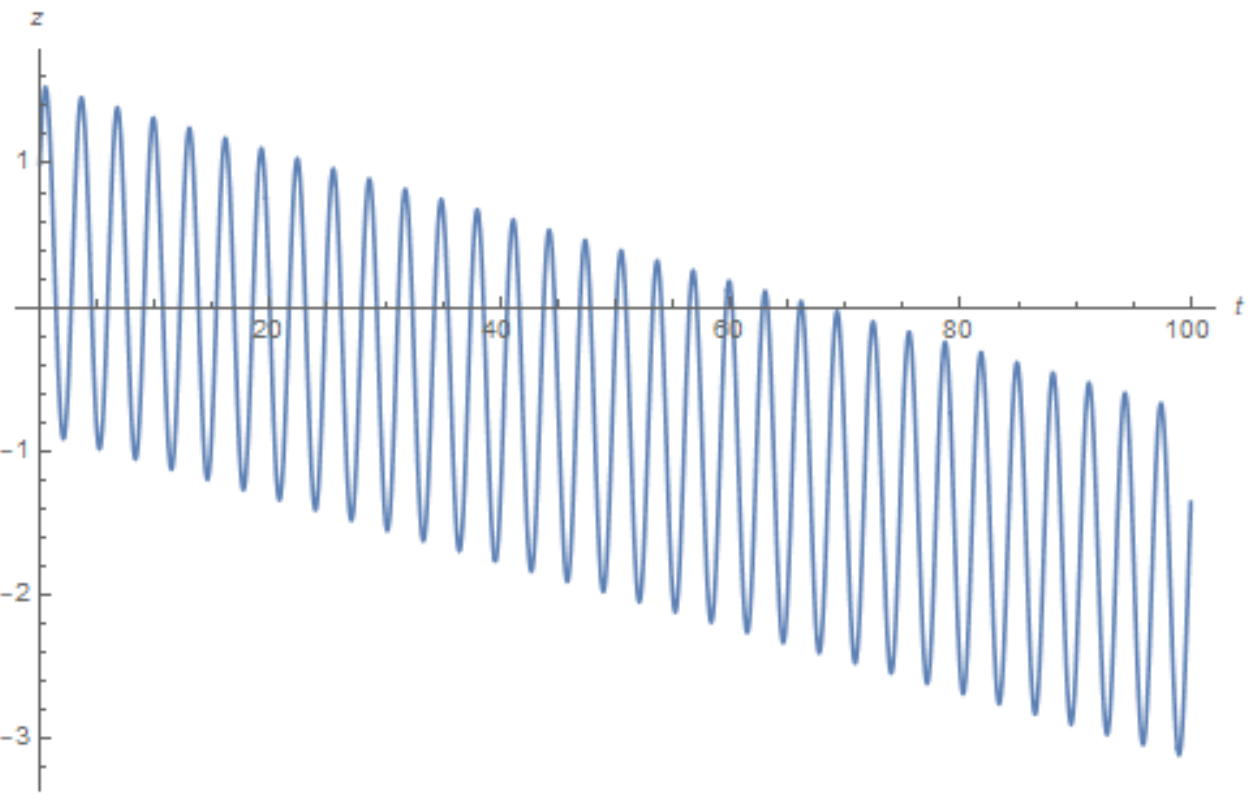}
\caption{Plot of $z$ in function of the time.}
\end{subfigure}
\caption{Trajectories of a charged particle moving in the magnetic field shown by Figure \ref{figure_champ_magnetic}.}
\label{figure_traj_class}
\end{figure}
\subsection*{Organization}
In Section \ref{sect_oper_hami}, the Hamiltonian is reduced to a family of 1D singular Sturm-Liouville operators. 
The band functions are introduced and described in Section \ref{sect_spe_ana}. 
Section \ref{sect_asy_beh} presents the results concerning the asymptotic behaviors of these band functions as $\xi$ and $m$ get large. More precisely, in Subsection \ref{sect_asympt_xi_inf}, we prove Theorem \ref{theo_asym_dvp} that provides an asymptotic expansion of $\lambda_{m,p}\paren{\xi}$ as $\xi$ gets large. 
Subsection \ref{sect_asy_der} presents the asymptotic study of the derivative. 
In particular, Theorem \ref{theo_comp_asym_der} provides the asymptotic behavior of $\lambda_{m,p}'\paren{\xi}$ as $m\to+\infty$ and as $\lambda_{m,p}\paren{\xi}$ is fixed far from the Landau level $E_{p}$. 
In Section \ref{sec_ope_cour}, we analyze the current carried by quantum states that are localized in energy away from the thresholds.\par
\renewcommand{\theequation}{\arabic{section}.\arabic{equation}}
\section{Reduction to one-dimensional Hamiltonians}
\label{sect_oper_hami}
In this section we define precisely the operators that we consider and we explain how $H$ is reduced to 1 dimensional operators.\par
Let $\mathbf{A}:\mathbb{R}^{n}\rightarrow\mathbb{R}^{n}$ be the magnetic potential given by definition \eqref{forme_potentiel} 
and let $H$ be the self-adjoint Schrödinger operator \eqref{operateur_H}. 
This operator is defined via its quadratic form
\begin{equation*}
\label{form_quad_h}
q\paren{u}:=\int_{\mathbb{R}^{n}}{\abs{-i\nabla u\paren{x}-A\paren{x}u\paren{x}}^{2}dx}.
\end{equation*}
This form, initially defined on $\fcttest{\mathbb{R}^{n}}$, is semi-bounded from below. 
Thus it admits a Friedrichs extension: $H$. 
Let $q_{\xi}$ be the quadratic form defined by
\begin{equation*}
\label{form_quad_q_xi}
q_{\xi}\paren{u}:=\int_{\mathbb{R}^{n-1}}{\paren{\abs{\paren{\nabla u}\paren{x}}^{2}+\paren{-\xi+\norme{\paren{x_{1},\cdots,x_{n-1}}}_{2}}^{2}\abs{u\paren{x}}^{2}}dx},\quad\xi\in\mathbb{R}.
\end{equation*}
This form, initially defined on $\fcttest{\mathbb{R}^{n-1}}$ and then closed in $\lp{2}{\mathbb{R}^{n-1}}$, is the quadratic form associated with the operator \eqref{defi_h_xi}. 
Denote by $\mathcal{F}$ the Fourier-transform with respect to $x_{n}$, which is defined by
\begin{equation*}
\label{defi_fourier}
\paren{\mathcal{F}u}\paren{\tilde{x},\xi}:=\frac{1}{\sqrt{2\pi}}\int_{\mathbb{R}}{e^{-i\xi x_{n}}u\paren{\tilde{x},x_{n}}dx_{n}},\quad
\paren{\tilde{x},\xi}\in\mathbb{R}^{n}.
\end{equation*}
The forms $q$ and $q_{\xi}$ are related through the relation
\begin{equation*}
q\paren{u}=\int\limits_{\mathbb{R}}{q_{\xi}\paren{\paren{\mathcal{F}\paren{u}}\paren{\xi}}d\xi}.
\end{equation*}
Therefore the operator $H$ is decomposed as follows: 
\begin{equation*}
\label{dec_int_dir}
H = \mathcal{F}^{-1}\paren{\int_{\mathbb{R}}^{\oplus}{H\paren{\xi}d\xi}}\mathcal{F}.
\end{equation*}
We now reduce the problem to a $1$-dimensional one using both the cylindrical symmetry and the following Laplace-Beltrami formula:
\begin{equation*}
\label{form_lap_bel}
\Delta_{\mathbb{R}^{n-1}}=\frac{1}{r^{n-2}}\partial_{r}\paren{r^{n-2}\partial_{r}}+\frac{1}{r^{2}}\Delta_{\sphere{n-2}}.
\end{equation*}
Recall that $-\Delta_{\mathbb{S}^{n-2}}$ is essentially self-adjoint on $\lp{2}{\sphere{n-2}}$ and that its spectrum is discrete. 
Its eigenvalues are $\mu_{m}:=m\paren{m+n-3}$, $m\in\mathbb{Z}_{+}$. 
Denote by $X_{m}$ the corresponding eigenspaces. Remember that $X_{m}$ has a finite dimension: $N_{m}$. 
The spaces $\lp{2}{\mathbb{R}_{+};r^{n-2}dr}\otimes X_{m}$ are invariant under $H\paren{\xi}$. 
In addition, the restrictions of the operator $H\paren{\xi}$ to these spaces are identified with the operators 
\begin{equation*}
\label{def_h_xi}
H_{m}\paren{\xi}:=-\frac{1}{r^{n-2}}\partial_{r}\paren{r^{n-2}\partial_{r}}+\frac{\mu_{m}}{r^{2}}+\paren{r-\xi}^{2}.
\end{equation*}
These operators act on $\lp{2}{r^{n-2}dr}$. They are associated with the bilinear forms
\begin{equation}
\label{defi_form_h_m}
h_{m}\paren{u,v}:=\int_{0}^{+\infty}{\paren{u'\paren{r}v'\paren{r}+\frac{\mu_{m}}{r^{2}}u\paren{r}v\paren{r}+\paren{r-\xi}^{2}u\paren{r}v\paren{r}}r^{n-2}dr}.
\end{equation}
Denote by $\Phi$ the angular Fourier transform. The operator $H\paren{\xi}$ is decomposed as: 
\begin{equation*}
H\paren{\xi}=\Phi^{-1}\paren{\bigoplus\limits_{m\in\mathbb{N}}{H_{m}\paren{\xi}}}\Phi.
\end{equation*}\par
Finally, it is more convenient to consider operators acting on the Hilbert space $\lp{2}{\mathbb{R}_{+}}$. 
To proceed we use the isometry $\phi\text{ : }\lp{2}{\mathbb{R}_{+};r^{n-2}dr}\rightarrow\lp{2}{\mathbb{R}_{+};dr}$ defined by $\displaystyle\paren{\phi u}\paren{r}=r^{\paren{n-2}/2}u\paren{r}$. 
We define $k_{m}$ as 
\begin{equation}\label{defi_k_n_m}
k_{m}:=\mu_{m}+\frac{n-2}{2}\paren{\frac{n-2}{2}-1}=\frac{\paren{2m+n-3}^{2}-1}{4}
\end{equation} 
and the functions $V_{m}$ as
\begin{equation}
\label{defi_potentiel}
V_{m}\paren{r,\xi}:=\frac{k_{m}}{r^{2}}+\paren{r-\xi}^{2},\quad\paren{r,\xi}\in\mathbb{R}_{+}\backslash\left\{0\right\}\times\mathbb{R}.
\end{equation}\par
So $H_{m}\paren{\xi} = \phi^{-1} L_{m}\paren{\xi}\phi$ where $L_{m}\paren{\xi}$ is defined by
\begin{equation}
\label{defi_L_m_xi}
L_{m}\paren{\xi}:=-\partial_{r}^{2}+V_{m}\paren{r,\xi}.
\end{equation}
This operator acts on $\lp{2}{\mathbb{R}_{+}}$ with domain $\domaine{L_{m}\paren{\xi}}=\phi\paren{\domaine{H_{m}\paren{\xi}}}$. 
It is associated with the quadratic form
\begin{equation}
\label{defi_form_l_m}
l_{m}\paren{u,\xi}:=\int_{0}^{+\infty}{\paren{\abs{u'\paren{r}}^{2}+V_{m}\paren{r,\xi}\abs{u\paren{r}}^{2}}dr}.
\end{equation}
\section{Basics about the eigenpairs of the fiber operator}
\label{sect_spe_ana}
In this section we prove that the dispersion curves are analytic functions, we calculate their derivative and we investigate the behavior of the eigenfunctions at $0$ .\par
\subsection{Behavior of the eigenfunctions at $0$}
First we investigate the behavior of the functions of $\domaine{L_{m}\paren{\xi}}$ at $0$, namely:
\begin{lemme}
\label{lemme_forme_domaine_L_m_xi}
Let $n\geqslant 3$, $m\in\mathbb{Z}_{+}$ and $\xi\in\mathbb{R}$.
\begin{equation}
\label{cond_zero_general_domaine}
\forall\varepsilon>0,\quad\domaine{L_{m}\paren{\xi}}\subset{\left\{
u\in\lp{2}{\mathbb{R}_{+}},\quad
u\underset{r\to 0}{=}o\paren{r^{\frac{1}{2}-\varepsilon}}
\right\}}.
\end{equation}
Moreover 
\begin{equation}
\label{domaine_L_m_xi}
\begin{array}{l}
\displaystyle\text{if }n=3,\quad \domaine{L_{m}\paren{\xi}}\subset \left\{u\in\lp{2}{\mathbb{R}_{+}},\quad u\paren{r}\underset{r\to 0}{=} O\paren{\sqrt{r}}\right\};\\
\displaystyle\text{if }n=4,\quad \domaine{L_{m}\paren{\xi}}\subset \left\{u\in\lp{2}{\mathbb{R}_{+}},\quad u\paren{r}\underset{r\to 0}{=} O\paren{r}\right\}.
\end{array}
\end{equation}
\end{lemme}
\begin{proof}[of \eqref{cond_zero_general_domaine}]
The bilinear form associated with $H_{m}\paren{\xi}$ is given by relation \eqref{defi_form_h_m}. 
For every $u\in\domaine{H_{m}\paren{\xi}}$ and every $v\in\domaine{h_{m}}$, we have $\scalaire{H_{m}\paren{\xi}u}{v}=h_{m}\paren{u,v}$. 
Notice that $\domaine{h_{m}}\subset\hp{1}{\mathbb{R}_{+}}$. We integrate by part the first term of the form $h_{m}$ which yields:
\begin{equation*}
\label{rel_lim_dom}
\lim_{r\to 0}u'\paren{r}v\paren{r}r^{n-2}=0,\quad u\in\domaine{H_{m}\paren{\xi}}\vi v\in\domaine{h_{m}}.
\end{equation*}
We apply this formula to an arbitrary function $u\in\domaine{H_{m}\paren{\xi}}$ and to functions $v_{\varepsilon}\in\cinfty{\mathbb{R}_{+}}\cap\domaine{h_{m}}$ that satisfy for any $\varepsilon>0$
\begin{equation*}
\begin{array}{ll}
v_{\varepsilon}\paren{r}= r^{\frac{3-n}{2}+\varepsilon},& \text{if }r\in\intgd{0}{1};\\
v_{\varepsilon}\paren{r}= 0,& \text{if } r\geqslant 2.\\
\end{array}
\end{equation*}
We deduce that
\begin{equation*}
u'\paren{r}\underset{r\to 0}{=} o\paren{\frac{1}{r^{\frac{n-1}{2}+\varepsilon}}},\quad\varepsilon>0\vi u\in\domaine{H_{m}\paren{\xi}}.
\end{equation*}
Therefore integrating this condition, we deduce that
\begin{equation*}
\begin{array}{l}
u\paren{r}\underset{r\to 0}{=} o\paren{r^{\frac{3-n}{2}-\varepsilon}},\quad\varepsilon>0\vi u\in\domaine{H_{m}\paren{\xi}}.
\end{array}
\end{equation*}
Thus remembering that $\domaine{L_{m}\paren{\xi}}=\phi\paren{\domaine{H_{m}\paren{\xi}}}$, we conclude that relation \eqref{cond_zero_general_domaine} holds.
\end{proof}
\begin{proof}[ of \eqref{domaine_L_m_xi}]
Note that $\domaine{H\paren{\xi}}\subset\hp{2}{\mathbb{R}^{n-1}}$. 
So if $n\in\left\{3,4\right\}$ then awing to a Sobolev embedding, $\hp{2}{\mathbb{R}^{n-1}}\subset\lp{\infty}{\mathbb{R}^{n-1}}$. 
Hence $\domaine{H\paren{\xi}}\subset\lp{\infty}{\mathbb{R}^{n-1}}$. 
Thus if $u\in\domaine{H_{m}\paren{\xi}}$, then $u\paren{r}$ is bounded as $r\to 0$. 
Combine it with the fact that $\domaine{L_{m}\paren{\xi}}=\phi\paren{\domaine{H_{m}\paren{\xi}}}$ and 
it provides the embedding \eqref{domaine_L_m_xi}.\par
\end{proof}
Notice that $V_{m}\paren{r,\xi}\to+\infty$ as $r\to+\infty$. Therefore the operator $L_{m}\paren{\xi}$ has compact resolvent. 
So for every $\xi\in\mathbb{R}$ and for every $m\in\mathbb{Z}_{+}$ 
the spectrum of $L_{m}\paren{\xi}$ is an increasing sequence of positive eigenvalues $\lambda_{m,p}\paren{\xi}$, $p\in\mathbb{N}$. 
We conclude this subsection by proving the following proposition.
\begin{prop}[Behavior of the eigenfunctions at $0$]
\label{prop_cond_bord_zero}
Let $\xi\in\mathbb{R}$, $m\in\mathbb{Z}_{+}$ and $p\in\mathbb{N}$. 
The eigenvalue $\lambda_{m,p}\paren{\xi}$ is non-degenerate. 
Let $u_{m,p}\paren{\cdot,\xi}$ be the normalized eigenfunction associated with it. 
There exists an analytic function $f$ such that $f\paren{0}\neq 0$ and such that in a neighborhood of $0$, 
\begin{equation}
\label{forme_sol_fuchs}
u_{m,p}\paren{r,\xi}=r^{\frac{1+\abs{2m+n-3}}{2}} f\paren{r}.
\end{equation}
\end{prop}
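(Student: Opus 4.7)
The plan is to treat the eigenvalue equation $L_m(\xi) u = \lambda_{m,p}(\xi) u$ as a second-order linear ODE and apply Frobenius theory. Written out, the equation reads
$$-u''(r) + \paren{\frac{k_m}{r^2} + (r-\xi)^2} u(r) = \lambda_{m,p}(\xi)\, u(r),$$
which has $r = 0$ as a regular singular point. Substituting the ansatz $u \sim r^\rho$ into the dominant part near $0$ yields the indicial equation $\rho(\rho - 1) = k_m$; using the identity $k_m = \paren{(2m+n-3)^2 - 1}/4$ from \eqref{defi_k_n_m}, its two roots are exactly
$$\rho_\pm = \frac{1 \pm \abs{2m+n-3}}{2}.$$
Classical Fuchs--Frobenius theory then produces two linearly independent solutions of the ODE: a distinguished solution $\varphi_+(r) = r^{\rho_+} f(r)$ with $f$ real-analytic near $0$ and $f(0) \neq 0$, and a second solution of the form $\varphi_-(r) = r^{\rho_-} g(r) + c\, \varphi_+(r) \log r$ with $g$ analytic, $g(0) \neq 0$, and $c$ a constant which can be nonzero only when the integer difference $\rho_+ - \rho_- = \abs{2m+n-3}$ forces a resonance.

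The core of the argument is to show that every element of $\domaine{L_m(\xi)}$ satisfying the eigenvalue equation is a scalar multiple of $\varphi_+$. I would split cases according to the size of $\rho_-$. When $\abs{2m+n-3} \geqslant 2$ one has $\rho_- \leqslant -1/2$, so the leading term $r^{\rho_-} g(0)$ of $\varphi_-$ fails to be in $\lp{2}{\intgd{0}{1}}$, and the possible logarithmic correction $\varphi_+ \log r$ does not repair $L^2$-integrability near $0$ either. This handles every case with $n \geqslant 5$, as well as $n \in \left\{3,4\right\}$ together with $m \geqslant 1$. The two remaining edge cases are exactly where the sharper bounds in \eqref{domaine_L_m_xi} are needed: for $n = 3, m = 0$ the indicial roots coincide at $1/2$ and $\varphi_-$ carries a $\sqrt{r}\log r$ singularity, which is not $O(\sqrt{r})$; for $n = 4, m = 0$ we have $\rho_+ = 1$, $\rho_- = 0$, and the non-zero constant limit $g(0)$ of $\varphi_-$ together with the possible $r \log r$ correction both violate the $O(r)$ bound.

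Once this reduction is established, the proposition follows directly. The eigenspace associated with $\lambda_{m,p}(\xi)$ is contained in the one-dimensional line spanned by $\varphi_+$, which proves non-degeneracy; renormalization yields the representation \eqref{forme_sol_fuchs}, with $f$ inherited from the Frobenius construction and therefore analytic near $0$ with $f(0) \neq 0$.

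The main obstacle is the boundary analysis in the two low-dimensional edge cases $n \in \left\{3,4\right\}$ with $m = 0$, where the indicial roots either coincide or differ by a positive integer and a logarithmic branch may a priori appear. There the soft $\lp{2}{\intgd{0}{1}}$-membership argument is not decisive, and one must call on the sharper pointwise estimates \eqref{domaine_L_m_xi}, which themselves rely on the Sobolev embedding $\hp{2}{\mathbb{R}^{n-1}} \hookrightarrow \lp{\infty}{\mathbb{R}^{n-1}}$ available precisely in dimensions $n \in \left\{3,4\right\}$.
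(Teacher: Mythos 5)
Your proposal is correct and takes essentially the same route as the paper: both write out the eigenvalue equation as a Fuchsian ODE, read off the indicial equation $\rho(\rho-1)=k_m$ with roots $\rho_\pm = (1\pm\abs{2m+n-3})/2$, identify the recessive Frobenius solution $r^{\rho_+}f(r)$, and then rule out the second solution by showing it is incompatible with the domain restrictions of Lemma \ref{lemme_forme_domaine_L_m_xi}. The paper constructs the second solution concretely via reduction of order ($u_- = h\,u_+$) rather than quoting the general Frobenius form, but that is cosmetic. One small over-reach on your side: for $(n,m)=(4,0)$ you invoke the sharper pointwise bound $O(r)$ from \eqref{domaine_L_m_xi}, but the softer condition \eqref{cond_zero_general_domaine} already excludes a solution with a nonzero constant limit at $0$, since a nonzero constant is not $o(r^{1/2-\varepsilon})$; the genuinely sharp edge case where \eqref{domaine_L_m_xi} is indispensable is only $(n,m)=(3,0)$, because $\sqrt{r}\log r$ does satisfy $o(r^{1/2-\varepsilon})$ for every $\varepsilon>0$ yet fails $O(\sqrt{r})$.
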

\begin{proof}
First, consider the differential equation
\begin{equation}
\label{edo_fuch}
r^{2}u''\paren{r}+\paren{r^{2}\paren{\lambda_{m,p}\paren{\xi}-\paren{r-\xi}^{2}}-k_{m}}u\paren{r}=0,\quad r>0.
\end{equation}
We look for solutions that admit a series expansion in a neighborhood of $0$. 
By the Frobenius method, if a solution $u$ is given by $u\paren{r}=r^{\nu}f\paren{r}$ where $f$ is an analytic function such that $f\paren{0}\neq 0$, then $\nu$ satisfies the indicial equation 
$$\nu\paren{\nu-1}=k_{m}.$$ 
This equation has $\nu_{\pm}:=\paren{1\pm\paren{2m+n-3}}/2$ as solutions.
Thus the equation \eqref{edo_fuch} admits a solution of the form $u_{+}\paren{r}=r^{\nu_{+}}f\paren{r}$ with $f$ an analytic function such that $f\paren{0}=1$. 
In order to have a basis of solutions for equation \eqref{edo_fuch} 
we look for a solution of the form $u_{-}=hu_{+}$. 
By straightforward calculations we find that $h'\paren{r}=Ku_{+}^{-2}\paren{r}\sim Kr^{-1-\abs{2m+n-3}}$ as $r\to 0$, so 
\begin{itemize}
\item if $\paren{n,m}=\paren{3,0}$, then $u_{-}\paren{r}\underset{r\to 0}{\sim} K\log\paren{r}\sqrt{r}$,
\item in the other cases, $u_{-}\paren{r}\underset{r\to 0}{\sim} Kr^{\nu_{-}}$.
\end{itemize}
Finally, we deduce from Lemma \ref{lemme_forme_domaine_L_m_xi} that in both cases $u_{-}\not\in\domaine{L_{m}\paren{\xi}}$. 
Hence $\ker\paren{L_{m}\paren{\xi}-\lambda_{m,p}\paren{\xi}}=\text{span}\paren{u_{+}}$. 
This concludes the proof since $\lambda_{m,p}\paren{\xi}$ is an eigenvalue of $L_{m}\paren{\xi}$.
\end{proof}
\begin{remarque}
We deduce from this proposition that the embedding \eqref{domaine_L_m_xi} is optimal.
\end{remarque}
According to Proposition \ref{prop_cond_bord_zero}, the eigenvalues $\lambda_{m,p}\paren{\xi}$ are non degenerate. 
Moreover, $L_{m}\paren{\xi}$ is a Kato analytic family \cite[Chapter VII]{Kat66}. 
Therefore it follows from Proposition \ref{prop_cond_bord_zero} that $\lambda_{m,p}$ are real analytic functions that are called band functions. 
\subsection{Derivative of the band functions}
\label{sub_part_calcul_der}
Here we give a formula for the derivative of the band functions.
\begin{prop}
\label{der_fin_fct_band}
Let, for $\paren{\xi,m,p}\in\mathbb{R}\times\mathbb{Z}_{+}\times\mathbb{N}$, $\ K_{m,p}\paren{\xi}:=\lim\limits_{r\to 0}{\dfrac{u_{m,p}\paren{r,\xi}^{2}}{r}}$.
The derivative $\lambda_{m,p}'\paren{\xi}$ is given by: 
\begin{equation*}
\label{forme_der_int}
\lambda_{m,p}'\paren{\xi}=\left\{
\begin{array}{ll}
\displaystyle -\int\limits_{0}^{+\infty}{\frac{1}{r^{2}}\left[
\frac{u_{m,p}\paren{r,\xi}^{2}}{r}-K_{0,p}\right]
dr}&\text{ if } n=3\text{ and } m=0,\\
-\abs{u'_{m,p}\paren{0,\xi}}^{2}&\text{ if } n=4\text{ and } m=0,\\
\displaystyle -2k_{m}\int\limits_{0}^{+\infty}{\frac{\abs{u_{m,p}\paren{r,\xi}}^{2}}{r^{3}}dr}&\text{ in the other cases.}
\end{array}
\right.
\end{equation*}
\end{prop}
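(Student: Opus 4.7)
The natural starting point is the Hellmann--Feynman theorem applied to the analytic family of self-adjoint operators $L_{m}\paren{\xi} = -\partial_{r}^{2} + V_{m}\paren{\cdot, \xi}$. Since $\lambda_{m,p}\paren{\xi}$ is simple by Proposition \ref{prop_cond_bord_zero} and the normalized eigenfunction $u_{m,p}$ can be chosen real, differentiation of the eigenvalue equation in $\xi$ and pairing with $u_{m,p}\paren{\cdot,\xi}$ yields
$$\lambda_{m,p}'\paren{\xi}=\int_{0}^{+\infty}\partial_{\xi}V_{m}\paren{r,\xi}\,u_{m,p}\paren{r,\xi}^{2}\,dr=-2\int_{0}^{+\infty}\paren{r-\xi}\,u_{m,p}\paren{r,\xi}^{2}\,dr.$$
The cancellation $\scalaire{L_{m}\paren{\xi}\partial_{\xi}u_{m,p}}{u_{m,p}}=\lambda_{m,p}\scalaire{\partial_{\xi}u_{m,p}}{u_{m,p}}$ that makes this identity work requires the boundary terms in the associated integration by parts to vanish at $0$ and at $+\infty$; the first is standard by the confining behavior of $V_{m}$, while the second follows from the Frobenius representation $u_{m,p}\paren{r,\xi}=r^{\paren{1+\abs{2m+n-3}}/2}f\paren{r,\xi}$ of Proposition \ref{prop_cond_bord_zero}, differentiated with respect to $\xi$.

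The bridge to the three stated formulas is the pointwise algebraic identity
$$\partial_{\xi}V_{m}\paren{r,\xi}+\partial_{r}V_{m}\paren{r,\xi}=-\frac{2k_{m}}{r^{3}},$$
which I use to split
$$\lambda_{m,p}'\paren{\xi}=-2k_{m}\int_{0}^{+\infty}\frac{u_{m,p}\paren{r,\xi}^{2}}{r^{3}}\,dr-\int_{0}^{+\infty}\partial_{r}V_{m}\paren{r,\xi}\,u_{m,p}\paren{r,\xi}^{2}\,dr.$$
The second integral is treated by integration by parts combined with the eigenvalue equation $V_{m}u_{m,p}=u_{m,p}''+\lambda_{m,p}u_{m,p}$ (to dispatch $\int V_{m}u_{m,p}u_{m,p}'\,dr$), turning it into a pure boundary expression at $r=0$ and $r=+\infty$. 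The $+\infty$ endpoint vanishes by decay of $u_{m,p}$, and the remaining contribution at $0$ is a combination of $V_{m}\paren{0}u_{m,p}\paren{0,\xi}^{2}$, $\abs{u_{m,p}'\paren{0,\xi}}^{2}$ and $\lambda_{m,p}u_{m,p}\paren{0,\xi}^{2}$. The case split in the statement is driven exactly by which of these survive.

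Three regimes appear, indexed by the Frobenius exponent $\nu_{+}=\paren{1+\abs{2m+n-3}}/2$ from Proposition \ref{prop_cond_bord_zero}. When $\nu_{+}>1$ (the ``other cases''), all boundary values vanish as $r\to 0$, and only $-2k_{m}\int u_{m,p}^{2}/r^{3}\,dr$ remains. When $\paren{n,m}=\paren{4,0}$ we have $k_{0}=0$ so the volume integral disappears, while the expansion $u_{m,p}\paren{r,\xi}=u_{m,p}'\paren{0,\xi}\,r+O\paren{r^{2}}$ leaves precisely the surviving boundary contribution $\abs{u_{m,p}'\paren{0,\xi}}^{2}$. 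The genuinely delicate case is $\paren{n,m}=\paren{3,0}$, where $\nu_{+}=1/2$ and $k_{0}=-1/4$: neither $\int u_{m,p}^{2}/r^{3}\,dr$ nor $\int\partial_{r}V_{m}u_{m,p}^{2}\,dr$ converges at $0$. The plan is to run the whole computation on $\croch{\varepsilon,+\infty}$ and let $\varepsilon\to 0$; the main obstacle is verifying that the $O\paren{\varepsilon^{-1}}$ singularity of $\int_{\varepsilon}u_{m,p}^{2}/r^{3}\,dr$ is exactly cancelled by the $\varepsilon$-dependence of $V_{m}\paren{\varepsilon}u_{m,p}\paren{\varepsilon,\xi}^{2}$ and $\abs{u_{m,p}'\paren{\varepsilon,\xi}}^{2}$. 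This cancellation hinges on the coincidence of the indicial roots at $\nu=1/2$, which forces the first sub-leading coefficient in $f\paren{\cdot,\xi}$ to vanish and thus $u_{m,p}\paren{r,\xi}^{2}/r-K_{0,p}\paren{\xi}=O\paren{r^{2}}$ near $0$; passing to the limit then produces the subtracted integral in the statement.
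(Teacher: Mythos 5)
Your proposal follows essentially the same route as the paper's proof: Hellmann--Feynman for $\lambda_{m,p}'$, integration by parts against the eigenvalue equation, handling of the endpoint at $+\infty$ by the super-exponential decay of the eigenfunctions and of the endpoint at $0$ by the Frobenius expansion of Proposition~\ref{prop_cond_bord_zero}, with the sharper local expansion $u_{m,p}^{2}/r - K_{0,p} = O\paren{r^{2}}$ reserved for the delicate case $\paren{n,m}=\paren{3,0}$. Your preliminary split via $\partial_{\xi}V_{m}+\partial_{r}V_{m}=-2k_{m}/r^{3}$ is purely cosmetic bookkeeping: since $\partial_{r}V_{m}=-2k_{m}/r^{3}+\partial_{r}\paren{r-\xi}^{2}$, the two $\pm 2k_{m}\int u^{2}/r^{3}\,dr$ recombine, and after your integration by parts the boundary block $-\bigl[V_{m}u^{2}\bigr]$ is exactly the paper's $-\bigl[\paren{r-\xi}^{2}u^{2}\bigr]-k_{m}\bigl[u^{2}/r^{2}\bigr]$ obtained there from two successive integrations by parts; the two computations produce identical boundary terms and the identical volume integral.

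One correction is warranted in your treatment of $\paren{n,m}=\paren{3,0}$. You attribute the vanishing of the first sub-leading Frobenius coefficient (equivalently $f'\paren{0}=0$, hence $u_{0,p}^{2}/r-K_{0,p}=O\paren{r^{2}}$ rather than merely $O\paren{r}$) to ``the coincidence of the indicial roots at $\nu=1/2$.'' That is not the correct reason. Writing $u=r^{\nu_{+}}\paren{c_{0}+c_{1}r+\cdots}$ and reading off the coefficient of $r^{\nu_{+}+1}$ in the equation gives $c_{1}\bigl[\paren{\nu_{+}+1}\nu_{+}-k_{m}\bigr]=2\nu_{+}c_{1}=0$, and since $\nu_{+}\geqslant 1/2>0$ for every $\paren{n,m}$ one always has $c_{1}=0$; coincidence of the indicial roots is irrelevant to this. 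What is special about $\paren{3,0}$ is only that $\nu_{+}=1/2$ makes $u'\paren{\varepsilon}^{2}$, $k_{m}u\paren{\varepsilon}^{2}/\varepsilon^{2}$ and $\int_{\varepsilon}u^{2}/r^{3}\,dr$ all diverge like $\varepsilon^{-1}$, so that the rate $O\paren{r^{2}}$ is genuinely needed for the cancellation and for the convergence of the subtracted integral. Your conclusion is correct, and substituting the Frobenius representation \eqref{forme_sol_fuchs} into the eigenvalue equation is precisely the device the paper uses; only the stated justification needs to be repaired.
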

\begin{proof}
In the case $n=3$, this proposition has already been proved \cite[Theorem 4.3]{Yaf08}. 
The way to prove it in the general case is the same as in this particular case so we refer to this proof for more details. 
We still present the main ideas of the proof.\par
The Feynman-Hellmann formula \cite{Mou88} yields that
\begin{equation}
\label{app_fey_hel_form}
\lambda_{m,p}'\paren{\xi}=\int_{\mathbb{R}_{+}}{\partial_{\xi}\paren{\paren{r-\xi}^{2}}\abs{u_{m,p}\paren{r,\xi}}^{2}dr}=
-\int_{\mathbb{R}_{+}}{\partial_{r}\paren{\paren{r-\xi}^{2}}\abs{u_{m,p}\paren{r,\xi}}^{2}dr}.
\end{equation} 
We apply integrations by parts to get the result.
We use the super-exponential decay of eigenfunctions $u_{m,p}\paren{\cdot,\xi}$ for handling the non-integral terms corresponding to $r\to+\infty$ \cite{Shn57,Olv97} 
and Proposition \ref{prop_cond_bord_zero} for handling the non-integral term at $r=0$. 
In the particular case $\paren{n,m}= \paren{3,0}$, the result of Proposition \ref{prop_cond_bord_zero} is not sharp enough. 
In order to improve it, we inject the identity \eqref{forme_sol_fuchs} into the following eigenvalue equation:
\begin{equation*}
L_{0}\paren{\xi}u_{0,p}\paren{r,\xi}=\lambda_{0,p}\paren{\xi}u_{0,p}\paren{r,\xi}.
\end{equation*}
Therefore we obtain that $u_{m,p}\paren{r,\xi}^{2}r^{-1}-K_{0,p}=O\paren{r^{2}}$ as $r\to 0$ and we use it for handling non-integral term at $r=0$.
\end{proof}
\subsection{Global behavior of the band functions}
\label{sub_part_comp_band}
The min-max principle implies that
\begin{equation*}
\label{eq_haute_freq}
\lambda_{m,p}\paren{\xi}\underset{\xi\to-\infty}{\sim}\xi^{2}.
\end{equation*}\par
Indeed first note that if $\xi\leqslant 0$, then $L_{m}\paren{\xi}\geqslant \xi^{2}$.
Therefore 
\begin{equation*}
\lambda_{m,p}\paren{\xi}\geqslant \xi^{2},\quad \xi\leqslant 0.
\end{equation*}
On the other hand, we define for $\varepsilon >0$ the operator $G\paren{\varepsilon}$, self-adjoint on $\lp{2}{\mathbb{R}_{+}}$,
\begin{equation*}
G\paren{\varepsilon}:=-\partial_{r}^{2} +\dfrac{k_{m}}{r^{2}}+\paren{1+\dfrac{1}{\varepsilon}}r^{2}.
\end{equation*}
This operator has compact resolvent, therefore its spectrum is discrete. 
Let $\paren{\nu_{q}\paren{\varepsilon}}_{q\in\mathbb{N}}$ be the increasing sequence of its eigenvalues. 
Note that $L_{m}\paren{\xi}\leqslant G\paren{\varepsilon}+\paren{1+\varepsilon}\xi^{2}$. 
Hence, for any $p\in\mathbb{N}$, 
$\lambda_{m,p}\paren{\xi}\leqslant\nu_{p}\paren{\varepsilon}+\paren{1+\varepsilon}\xi^{2}$.
Thus, 
\begin{equation}
\forall\varepsilon>0,\quad\limsup_{\xi\to-\infty}{\frac{\lambda_{m,p}\paren{\xi}}{\xi^{2}}}\leqslant 1+\varepsilon.
\end{equation}\par
From Proposition \ref{der_fin_fct_band} we deduce that if $\paren{n,m}\neq\paren{3,0}$, then 
for every $p\in\mathbb{N}\vi\lambda_{m,p}'$ is negative on $\mathbb{R}$. 
Therefore in this case the band functions are decreasing. 
So these functions admit finite limits at $+\infty$. 
In the case $n=3$ the min-max principle yields that these limits are the Landau levels \cite[Proposition 3.6]{Yaf08}, namely
\begin{equation}
\label{lim_fct_bande}
\lim_{\xi\to+\infty}{\lambda_{m,p}\paren{\xi}}=E_{p}:=2p-1,\quad p\in\mathbb{N}.
\end{equation}
This proof is still valid if $n>3$ and Subsection \ref{sect_asympt_xi_inf} provides an asymptotic expansion of $\lambda_{m,p}\paren{\xi}$ when $\xi$ tends to $+\infty$. 
In the case $n=3$ then $k_{0}=-4^{-1}<0$. Therefore we will deduce from Theorem \ref{theo_asym_dvp} (see remark \ref{rem_calc_ord_1}) that for every $p\in\mathbb{N}\vi\lambda_{0,p}$ admits local minima (the question of the number of minima stays open). 
In the other cases, according to Proposition \ref{der_fin_fct_band}, 
for every $p\in\mathbb{N}\vi\lambda_{m,p}$ is decreasing from $+\infty$ to $E_{p}$.\par
\paragraph{Numerical approximation.}
We use a finite difference method to compute numerical approximations of the band function $\lambda_{m,p}\paren{\xi}$ 
with $n=5$, $m\in\entier{0}{6}$ and $p\in\entier{1}{3}$. 
We compute for $\xi\in\segment{-1}{6}$ on the interval $\segment{0}{20}$ with an artificial Dirichlet boundary condition at $r=20$.\par
On Figure \ref{plot_band_fct}, we have plotted the numerical approximation of $\lambda_{m,p}\paren{\xi}$ 
for $\xi\in\segment{-1}{6}$, $m\in\entier{0}{3}$ and $p\in\entier{1}{3}$. 
According to the theory, $\lambda_{m,p}$ decrease from $+\infty$ to $E_{p}=2p-1$. We also ploted this level. 
Note that different band function may intersect for different values of $m$.\par
Figure \ref{zoom_band_p_1} presents a zoom on the first level: $p=1$ for  $\xi\in\segment{-1}{6}$ and $m\in\entier{0}{6}$.\par
\begin{figure}[!htb]
\begin{center}
\begin{tabular}{c}
\includegraphics[width=\columnwidth]{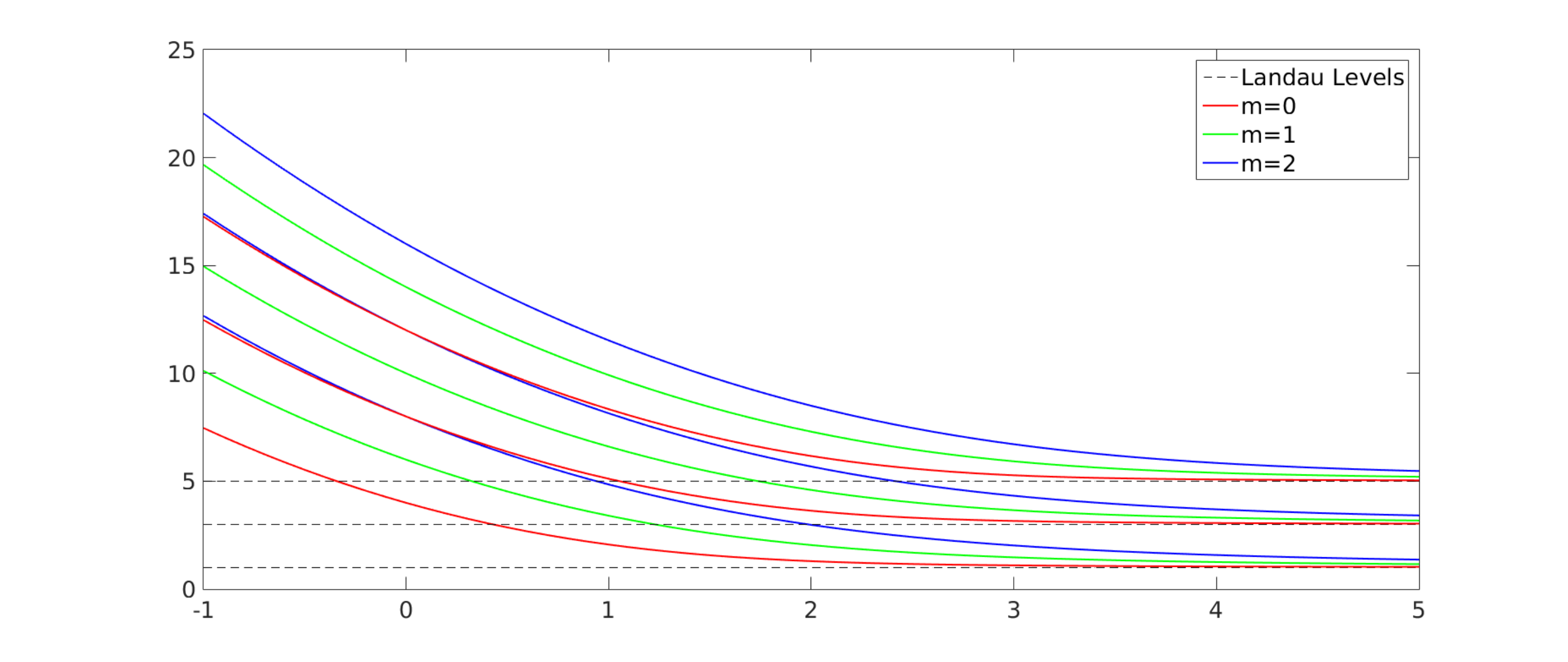}
\end{tabular}
\end{center}
\caption{Plot of the band functions $\lambda_{m,p}\paren{\xi}$ for 
$n=5$, $0\leqslant m\leqslant 3$, 
$1\leqslant p\leqslant 3$ and $\xi\in\left[-1,5\right]$.}\label{plot_band_fct}
\end{figure}
\begin{figure}[!htb]
\begin{center}
\begin{tabular}{c}
\includegraphics[width=\columnwidth]{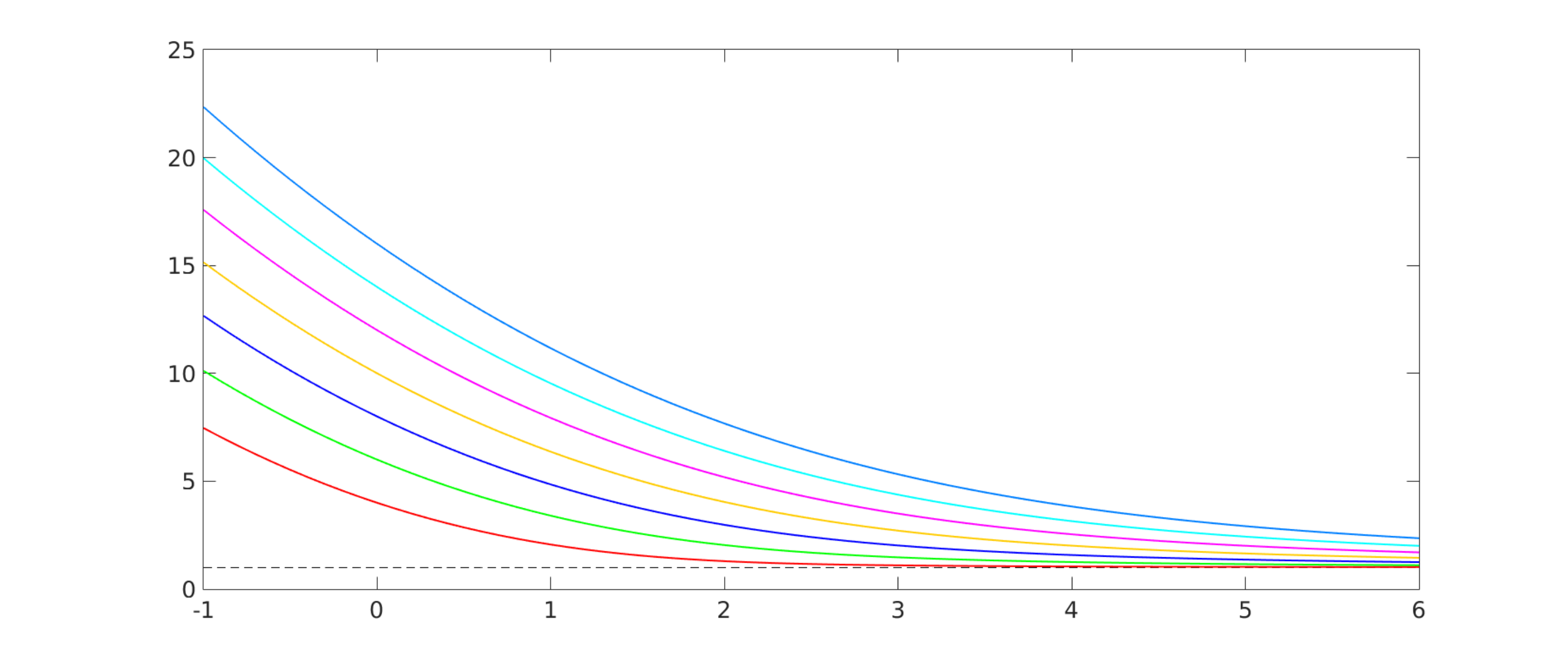}
\end{tabular}
\end{center}
\caption{Plot of the band functions $\lambda_{m,1}\paren{\xi}$ for 
$n=5$, $0\leqslant m\leqslant 6$ and $\xi\in\left[-1,6\right]$.}\label{zoom_band_p_1}
\end{figure}
Graph courtesy of N. Popoff.
\section{Asymptotic behavior of the band functions}
\label{sect_asy_beh}
In this section we provide an asymptotic expansion for the band functions and their derivative. 
First we provide an asymptotic expansion for $\lambda_{m,p}\paren{\xi}$ as $\xi\to+\infty$ with $m$ and $p$ fixed. 
In a second time we estimate the behavior of $\lambda_{m,p}'\paren{\xi}$ as $p$ is fixed and as $m$ and $\xi$ tend to $+\infty$ and 
are related to eachother by the condition $\lambda_{m,p}\paren{\xi}=E$ where $E$ is a constant.\par
\subsection{Near thresholds: high frequency}
\label{sect_asympt_xi_inf}
In this subsection we study the behavior of the spectrum of $H$ near the thresholds. 
Namely we describe the behavior of $\lambda_{m,p}\paren{\xi}$ when $m$ and $p$ are fixed and $\xi\to+\infty$. 
More precisely, this subsection is devoted to the proof of the following theorem.
\begin{theo}[Asymptotic expansion of the band functions]
\label{theo_asym_dvp}
Let $m\in\mathbb{Z}_{+}$ and $p\in\mathbb{N}$. 
There is a sequence of real numbers $\paren{\alpha^{p}_{q}}_{q\in\mathbb{N}}$ such that 
\begin{equation*}
\forall N\geqslant 0\vi\exists C>0\vi\exists\xi_{0}>0\vi\forall\xi\geqslant\xi_{0},\quad
\abs{
\lambda_{m,p}\paren{\xi}
-E_{p}-k_{m}\sum_{q=1}^{N}{\frac{\alpha^{p}_{q}}{\xi^{q}}}
}
\leqslant \dfrac{C}{\xi^{N+1}}.
\end{equation*}
\end{theo}
To prove this theorem we consider the operators $L_{m}\paren{\xi}$ defined by relation \eqref{defi_L_m_xi} and 
we apply the method of the harmonic approximation \cite{Hel88,Dim99} to derive an asymptotic expansion of its eigenvalues.\par
\begin{remarque}
In the case $k_{m}=0$, that is $\paren{n,m}=\paren{4,0}$, Theorem \ref{theo_asym_dvp} states that 
$\lambda_{m,p}\paren{\xi}=E_{p}+O\paren{\xi^{-\infty}}$, as $\xi\to +\infty$. 
In this case, the operator is $-\partial_{r}^{2}+\paren{r-\xi}^{2}$ with Dirichlet boundary condition at $0$. 
This operator has already been studied and we know \cite[Theorem 1.4]{His16} \cite[Section 15.A]{Ivrii} that there are some constant $\gamma_{p}>0$ such that 
\begin{equation*}
\lambda_{0,p}\paren{\xi}\underset{\xi\to+\infty}{=}E_{p}+\gamma_{p}\xi^{2p-1}e^{-\xi^{2}}\paren{1+O\paren{\xi^{-2}} }.
\end{equation*}
So we focus on the proof in the particular case $k_{m}\neq 0$.
\end{remarque}

\begin{remarque}
\label{rem_calc_ord_1}
We compute that $\alpha_{1} = 0$ and $\alpha_{2}=1$. 
Therefore for $N=2$, Theorem \ref{theo_asym_dvp} yields 
\begin{equation*}
\lambda_{m,p}\paren{\xi}=E_{p}+\frac{k_{m}}{\xi^{2}}+O\paren{\frac{1}{\xi^{3}}}.
\end{equation*}\par
In the case $n=3$ and $m=0$, $k_{m}=-4^{-1}<0$. 
Therefore for every $p\in\mathbb{N}$, $\lambda_{0,p}\paren{\xi}$ tend to $E_{p}$ from below. 
Hence the $\lambda_{0,p}$ have local minima.\par
\end{remarque}
\paragraph{Canonical transformation and asymptotic expansion of the operator}\ \\
For $\xi\in\mathbb{R}_{+}$ we apply the change of variable $s=r-\xi$. 
It shows that $L_{m}\paren{\xi}$ is unitarily equivalent to the following operator acting on $\lp{2}{-\xi,+\infty}$:
\begin{equation*}
\label{ope_equiv_l_m_xi}
\tilde{L}_{m}\paren{\xi}=-\partial_{s}^{2}+\frac{k_{m}}{\paren{s+\xi}^{2}}+s^{2}=
-\partial_{s}^{2}+s^{2}+\frac{k_{m}}{\xi^{2}}\frac{1}{\paren{1+\frac{s}{\xi}}^{2}}.
\end{equation*}
A Taylor expansion of the potential for large $\xi$ provides
\begin{equation}
\label{taylor_expansion}
\dfrac{k_{m}}{\paren{s+\xi}^{2}}=\frac{k_{m}}{\xi^{2}}\sum_{q=0}^{N}{\paren{q+1}\paren{\frac{-s}{\xi}}^{q}}+R_{N}\paren{s,\xi},\quad N\geqslant 0.
\end{equation}
Estimation on the remainder term $R_{N}\paren{s,\xi}$ will be written later (see equation \eqref{controle_reste}). 
We define a sequence of formal operators by 
\begin{equation*}
\label{defi_ope_int}
\left\{\begin{array}{l}
H_{0}:=-\partial_{s}^{2}+s^{2},\\
A_{1}:=0,\\
\forall q\geqslant 2\vi A_{q}:=\paren{q-1}\paren{-s}^{q-2}.
\end{array}\right.
\end{equation*}
For every $N\in\mathbb{N}$, we set 
\begin{equation}
\label{defi_L_m_N}
\tilde{L}_{m}^{N}\paren{\xi}:=H_{0}+k_{m}\sum\limits_{q=1}^{N}{\frac{A_{q}}{\xi^{q}}}, 
\end{equation} 
with the convention $\tilde{L}_{m}^{0}=H_{0}$. 
We set $R_{-2}\paren{s,\xi}=R_{-1}\paren{s,\xi}:=k_{m}\paren{s+\xi}^{-2}$. 
For every $N\geqslant 0$, the operator $\tilde{L}_{m}\paren{\xi}$ can be formally decomposed into:
\begin{equation*}
\label{decomp_op}
\tilde{L}_{m}\paren{\xi}=\tilde{L}_{m}^{N}\paren{\xi}+R_{N-2}\paren{s,\xi}.
\end{equation*}
First we look for quasi-modes for the formal operator $\tilde{L}_{m}^{N}\paren{\xi}$ acting on $\lp{2}{\mathbb{R}}$. 
This formal procedure provides functions defined on $\mathbb{R}$ and we use a suitable cut-off function in $\intgd{-\xi}{+\infty}$ to derive quasi-modes for $\tilde{L}_{m}\paren{\xi}$.
\paragraph{Calculation of the quasi-modes}\ \\
We look for quasi-eigenpairs $\paren{\lambda^{N}_{m}\paren{\xi},f_{m}^{N}\paren{\cdot,\xi}}$ of $\tilde{L}_{m}^{N}\paren{\xi}$ of the form
$$\displaystyle\paren{\lambda^{N}_{m}\paren{\xi},f_{m}^{N}\paren{\cdot,\xi}}=\paren{\alpha_{0}+k_{m}\sum_{q=1}^{N}{\frac{\alpha_{q}}{\xi^{q}}},
\sum_{q=0}^{N}{\frac{g_{q}}{\xi^{q}}}},$$
where the functions $g_{q}$ are mutually orthogonal in $\lp{2}{\mathbb{R}}$. Note that the functions $g_{q}$ may depend on $k_{m}$. 
We are led to solve the system
\begin{equation}
\label{systeme}
\begin{array}{l}
\paren{H_{0}-\alpha_{0}}g_{0}=0,\\
\paren{H_{0}-\alpha_{0}}g_{q}+k_{m}\sum\limits_{j=1}^{q}{\paren{A_{j}-\alpha_{j}}g_{q-j}}=0,\quad q\in\entier{1}{N}.
\end{array}
 \end{equation}
We solve it by induction:
\begin{itemize}
\item $\mathbf{q=0}$\\
Note that $H_{0}$ is the quantum harmonic oscillator. Hence we choose for $\paren{\alpha_{0},g_{0}}$ a couple $\paren{E_{p},\Psi_{p}}$ for $p\in\mathbb{N}$
where $E_{p}=2p-1$ is a Landau level, and $\Psi_{p}$ is the corresponding normalised Hermite function with the convention that 
$\Psi_{1}\paren{s}=\paren{2\pi}^{-1/4}e^{-t^{2}/2}$. 
So from now on we set $\paren{\alpha_{0},g_{0}}=\paren{\alpha_{0}^{p},g_{0}^{p}}=\paren{E_{p},\Psi_{p}}$ for a certain $p\in\mathbb{N}$, fixed. 
All the quantities considered in what follows may depend on the choice of $p$. 
We simplify the notations with omitting this index.
\item \textbf{Induction}\\
We assume that there exists $q_{0}\in\entier{1}{N}$ such that for every $q\leqslant q_{0}-1$, $\alpha_{q}$ and $g_{q}$ have been constructed.\par
The scalar product of the second equation of the system \eqref{systeme} with $g_{0}$ provides the value of $\alpha_{q_{0}}$:\par
\begin{equation*}
\label{trouve_lambda}
\alpha_{q_{0}}=\scalaire{A_{q_{0}}g_{0}}{g_{0}}+\scalaire{\sum_{q=1}^{q_{0}-1}{\paren{A_{q}-\alpha_{q}}g_{q_{0}-q}}}{g_{0}}.
\end{equation*}
So $\alpha_{q_{0}}$ is known, therefore the Fredholm alternative provides a unique value for $g_{q_{0}}$ such that $\scalaire{g_{q_{0}}}{g_{q}}=0$ for every $q<q_{0}$. 
\end{itemize}
The quasi-modes $f^{N}_{m}\paren{\cdot,\xi}$ can be computed using the Hermite functions. 
The Hermite functions satisfy the following results
\begin{equation*}
\begin{array}{l}
\forall q\geqslant 1\vi\exists P\in\mathbb{R}\left[X\right]\vi\forall s\in\mathbb{R},\quad\Psi_{q}\paren{s}=e^{-\frac{s^{2}}{2}}P\paren{s},\\
\forall q\geqslant 1,\quad s\Psi_{q}\paren{s}=\sqrt{\dfrac{q-1}{2}}\Psi_{q-1}\paren{s}+\sqrt{\dfrac{q}{2}}\Psi_{q+1}\paren{s}.\\
\end{array}
\end{equation*}
Combining them with the system \eqref{systeme} we infer that for every $N\geqslant 0$, there exist polynomial functions $P_{0},\cdots,P_{N}$ such that
\begin{equation}
\label{forme f_N_xi}
f_{m}^{N}\paren{s,\xi}=e^{-\frac{s^{2}}{2}}\sum_{q=0}^{N}{\frac{P_{q}\paren{s}}{\xi^{q}}}, \quad\xi>0\vi s\in\mathbb{R} .
\end{equation}
\paragraph{Evaluation of the quasi-mode}\ \\
Previously we have obtained quasi-eigenpairs $\paren{\lambda_{m}^{N}\paren{\xi},f_{m}^{N}\paren{\cdot,\xi}}$ for $\tilde{L}_{m}^{N}\paren{\xi}$.
The functions $f_{m}^{N}$ are defined on $\mathbb{R}$. 
We now use a suitable cut-off function to get quasi-modes $u^{N}_{m}\paren{\cdot,\xi}$ for $\tilde{L}_{m}\paren{\xi}$.\par
Let $\chi\in\fcttest{\mathbb{R};\left[0,1\right]}$ such that
\begin{equation*}
\chi\paren{x}=\left\{\begin{array}{l}
1\quad \text{if }\abs{x}\leqslant 1/2;\\
0\quad \text{if }\abs{x}\geqslant 1.
\end{array}\right.
\end{equation*}\par
For $\xi\in\mathbb{R}_{+}$, we define the cut-off function $\chi_{\xi}$ on $\mathbb{R}$ by 
\begin{equation}
\label{cuttoff}
\chi_{\xi}\paren{t}:=\chi\paren{\frac{2t}{\xi}},\quad t\in\mathbb{R}.
\end{equation}
Note that this function is supported in $\intgd{-\xi/2}{\xi/2}$ and is equal to $1$ on $\intgd{-\xi/4}{\xi/4}$. 
Let, for $N\geqslant 0$, $u_{m}^{N}$ be defined by
\begin{equation}
\label{quasimod}
u_{m}^{N}\paren{r,\xi}:=\chi_{\xi}\paren{r}f_{m}^{N}\paren{r,\xi},\quad \xi>0\vi r\in\mathbb{R}.
\end{equation}\par
Since $\text{supp}\paren{u_{m}^{N}\paren{\cdot,\xi}}\subset \text{supp}\paren{\chi_{\xi}}\subset\intgd{-\xi/2}{\xi/2}$, 
$u_{m}^{N}$ can be used as a quasi-mode for $\tilde{L}_{m}\paren{\xi}$.
\begin{lemme}[Control of the quasi-mode]
\label{lemme_ctrl_quas_mod}
Let $N\in\mathbb{Z}_{+}$. 
Recalling that $m$, $N$ and $p$ are fixed, there is a constant $K\geqslant 0$ such that
\begin{equation*}
\label{eq_pou_app_th_spec}
\exists\xi_{0}>0\vi\forall\xi\geqslant\xi_{0},\quad\norme{\paren{\tilde{L}_{m}\paren{\xi}-\lambda^{N}_{m}\paren{\xi}}u_{m}^{N}\paren{\cdot,\xi}}_{2}\leqslant\dfrac{K}{\xi^{N+1}}
\end{equation*}
\end{lemme}
\begin{proof}
First, observe that
\begin{equation}
\label{eq_norme_decomp_ope}
\begin{array}{c}
\norme{\paren{\tilde{L}_{m}\paren{\xi}-\lambda_{m}^{N}\paren{\xi}} u_{m}^{N}\paren{\cdot,\xi}}_{2}\leqslant 
\norme{\chi_{\xi}\paren{\tilde{L}_{m}^{N}\paren{\xi}-\lambda_{m}^{N}\paren{\xi}}f_{m}^{N}\paren{\cdot,\xi}}_{2}+
\norme{R_{N-2}\paren{\cdot,\xi}u_{m}^{N}\paren{\cdot,\xi}}_{2}\\
+\norme{\left[\tilde{L}_{m}^{N}\paren{\xi},\chi_{\xi}\right]f_{m}^{N}\paren{\cdot,\xi}}_{2}.
\end{array}
\end{equation}
We proceed to control the right hand side term by term:
\begin{itemize}
\item We use the definition of $f_{m}^{N}$ to compute the first term:
$$\paren{\tilde{L}_{m}^{N}\paren{\xi}-\lambda_{m}^{N}\paren{\xi}}f_{m}^{N}\paren{\cdot,\xi}=
\sum_{q=N+1}^{2N}{
\frac{k_{m}}{\xi^{q}}\sum_{i+j=q}{\paren{A_{i}-\alpha_{i}}g_{j}}
}.$$
Thus we deduce that
$$\exists K>0,\quad
\norme{\chi_{\xi}\paren{\tilde{L}_{m}^{N}\paren{\xi}-\lambda^{N}_{m}\paren{\xi}}f_{m}^{N}\paren{\cdot,\xi}}_{2}\leqslant\dfrac{Kk_{m}}{\xi^{N+1}}.$$
Note that $K$ may depend on $m$.
\item
Remind that $R_{N}\paren{s,\xi}$ is defined by relation \eqref{taylor_expansion}, the localization of $\text{supp}\paren{\chi_{\xi}}$ provides the following estimate:
\begin{equation*}
\begin{array}{ll}
\exists C>0\vi\forall s\in \text{supp}\paren{\chi_{\xi}},\quad \abs{R_{N-2}\paren{s,\xi}}\leqslant\dfrac{C s^{N+1}}{\xi^{N+1}},&\text{if }N\geqslant 2;\\
\forall s\in \text{supp}\paren{\chi_{\xi}},\quad \abs{R_{N-2}\paren{s,\xi}}\leqslant 4\dfrac{k_{m}}{\xi^{2}},&\text{if }N\in\left\{0,1\right\}.
\end{array}\end{equation*}
Hence using the exponential decay of Hermite functions, we deduce from the definition of $u_{m}^{N}$ and from relation \eqref{forme f_N_xi} that 
\begin{equation}\label{controle_reste}\begin{array}{ll}
\norme{R_{N-2}\paren{\cdot,\xi}u_{m}^{N}\paren{\cdot,\xi}}_{2}\leqslant\dfrac{K}{\xi^{N+1}}&\text{if }N\geqslant 2,\\
\norme{R_{N-2}\paren{\cdot,\xi}u_{m}^{N}\paren{\cdot,\xi}}_{2}\leqslant 4\dfrac{k_{m}}{\xi^{2}}&\text{if }N\in\left\{0,1\right\}.
\end{array}\end{equation}
\item 
Finally notice that $\left[\tilde{L}_{m}^{N}\paren{\xi},\chi_{\xi}\right]f_{m}^{N}\paren{\cdot,\xi}=
2\chi_{\xi}'\paren{f_{m}^{N}}'\paren{\cdot,\xi}+\chi_{\xi}''f_{m}^{N}\paren{\cdot,\xi}$. 
Moreover, $\chi_{\xi}'$ and $\chi_{\xi}''$ are supported in $\left\{t\in\mathbb{R},\quad\xi/4<\abs{t}<\xi/2\right\}$. 
Therefore we deduce from formula \eqref{forme f_N_xi} that
\begin{equation*}
\label{ctr_ter_1}
\norme{\left[\tilde{L}_{m}^{N}\paren{\xi},\chi_{\xi}\right]f_{m}^{N}\paren{\cdot,\xi}}_{2}=O\paren{\frac{1}{\xi^{\infty}}}.
\end{equation*}
\end{itemize}
\end{proof}

\paragraph{Proof of Theorem \ref{theo_asym_dvp}}\ \\
We deduce from the spectral theorem and from Lemma \ref{lemme_ctrl_quas_mod} that
\begin{equation*}
d\paren{\lambda^{N}_{m}\paren{\xi},\sigma\paren{L_{m}\paren{\xi}}}\norme{u_{m}^{N}\paren{\cdot,\xi}}_{\lp{2}{-\xi,+\infty}}\leqslant
\frac{K}{\xi^{N+1}}.
\end{equation*}\par
Moreover $\norme{u_{m}^{N}\paren{\cdot,\xi}}_{\lp{2}{-\xi,+\infty}}=
\norme{f_{m}^{N}\paren{\cdot,\xi}}_{\lp{2}{\mathbb{R}}}+O\paren{\xi^{-\infty}}$ 
and $\norme{f_{m}^{N}\paren{\cdot,\xi}}_{\lp{2}{\mathbb{R}}}=1+O\paren{\xi^{-2}}$. Therefore
\begin{equation}
\label{lim_norme_quasi_mod}
\lim_{\xi\to+\infty}{\norme{u_{m}^{N}\paren{\cdot,\xi}}_{\lp{2}{-\xi,+\infty}}}=1
\end{equation}
Hence for $\xi$ large enough
\begin{equation*}
d\paren{\lambda^{N}_{m}\paren{\xi},\sigma\paren{L_{m}\paren{\xi}}}\leqslant
\frac{\tilde{K}}{\xi^{N+1}}.
\end{equation*}\par
Finally we observe that $\lambda^{N}_{m}\paren{\xi}\to E_{p}$, as $\xi\to+\infty$. 
We combine it with the identity \eqref{lim_fct_bande} that provides the statement of the theorem.\par
\subsection{Near other energy levels: high frequency and high angular momentum}
\label{sect_asy_der}
We are now interested in the behavior of the spectrum of $H$ near other energy levels. 
First if $\xi$ is fixed, then $\lambda_{m,p}\paren{\xi}$ tends to $+\infty$ as $m\to+\infty$. 
In a second time we study the behavior of the band functions when $m$ and $\xi$ tends to $+\infty$ together. 
More precisely we fix an integer $p$ and an energy level $E>E_{p}$ and we study the behavior of $\lambda_{m,p}'\paren{\xi}$ when $\lambda_{m,p}\paren{\xi}=E$.\par
Remember that the quadratic form defined by equation \eqref{defi_form_l_m} is associated to $L_{m}\paren{\xi}$ 
and that $u_{m,p}\paren{\cdot,\xi}$ denotes the normalized eigenfunction of $L_{m}\paren{\xi}$ associated with the eigenvalue $\lambda_{m,p}\paren{\xi}$. 
Therefore, 
\begin{equation}
\label{fct_band_quad_form}
\lambda_{m,p}\paren{\xi}=\int_{0}^{+\infty}{\abs{u_{m,p}'\paren{r,\xi}}^{2}+V_{m}\paren{r,\xi}u_{m,p}\paren{r,\xi}^{2}dr}.
\end{equation}\par
Moreover $k_{m}$ (resp. $V_{m}$) is defined by relation \eqref{defi_k_n_m} (resp. relation \eqref{defi_potentiel}).
Hence for every $m>0\vi k_{m}\geqslant 0$. 
Therefore the following useful estimates are valid for every $m>0$:
\begin{equation}
\label{maj1}\lambda_{m,p}\paren{\xi}\geqslant k_{m}\int_{0}^{+\infty}{\frac{\abs{u_{m,p}\paren{r,\xi}}^{2}}{r^{2}}dr},\\
\end{equation}
\begin{equation}
\label{maj2}\lambda_{m,p}\paren{\xi}\geqslant \int_{0}^{+\infty}{\paren{r-\xi}^{2}\abs{u_{m,p}\paren{r,\xi}}^{2}dr}.
\end{equation}
\begin{prop}[Limit of the band functions]
\label{theo_lim_lam_m}
For every $p\in\mathbb{N}$ and every $\xi\in\mathbb{R}$,
$$\displaystyle\lim_{m\to+\infty}{\lambda_{m,p}\paren{\xi}}=+\infty.$$
\end{prop}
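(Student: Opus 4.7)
The plan is to reduce to the ground state $\lambda_{m,1}\paren{\xi}$ and then bound it from below by $\inf_{r>0}V_{m}\paren{r,\xi}$ via the min-max principle; the result will follow because this infimum diverges with $m$ thanks to the growing centrifugal coefficient $k_{m}$.

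First I would observe that since for each fixed $m$ and $\xi$ the sequence $\paren{\lambda_{m,p}\paren{\xi}}_{p\in\mathbb{N}}$ is arranged in non-decreasing order, we have $\lambda_{m,p}\paren{\xi}\geqslant \lambda_{m,1}\paren{\xi}$ for every $p$, so it suffices to handle the case $p=1$. Applying the min-max principle to the quadratic form \eqref{defi_form_l_m} and dropping the non-negative kinetic contribution $\int_{0}^{+\infty}\abs{u'\paren{r}}^{2}dr$, one obtains
$$\lambda_{m,1}\paren{\xi}\;=\;\inf_{\substack{u\in\domaine{L_{m}\paren{\xi}}\\ \norme{u}_{2}=1}}l_{m}\paren{u,\xi}\;\geqslant\;\inf_{r>0}V_{m}\paren{r,\xi}.$$

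For the final step, I would use the elementary inequality $\paren{r-\xi}^{2}\geqslant \tfrac{1}{2}r^{2}-\xi^{2}$ together with the AM-GM inequality applied to $k_{m}/r^{2}$ and $r^{2}/2$ (valid once $m$ is large enough that $k_{m}\geqslant 0$, which holds eventually in view of \eqref{defi_k_n_m}) to obtain
$$V_{m}\paren{r,\xi}\;\geqslant\;\frac{k_{m}}{r^{2}}+\frac{r^{2}}{2}-\xi^{2}\;\geqslant\;\sqrt{2k_{m}}-\xi^{2}.$$
Since $k_{m}=\paren{\paren{2m+n-3}^{2}-1}/4\to+\infty$ as $m\to+\infty$ with $\xi$ fixed, this lower bound tends to $+\infty$, which concludes the proof.

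I do not anticipate any real obstacle here: the result simply expresses the familiar fact that a growing centrifugal barrier $k_{m}/r^{2}$ forces the potential well to deepen and the bottom of the spectrum to diverge. One could alternatively combine the bounds \eqref{maj1} and \eqref{maj2} on $\lambda_{m,p}\paren{\xi}$ directly, but the pointwise estimate above is both cleaner and provides the slightly stronger conclusion $\lambda_{m,p}\paren{\xi}\geqslant \sqrt{2k_{m}}-\xi^{2}$ uniformly in $p$.
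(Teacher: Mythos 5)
Your proof is correct but takes a genuinely different and more elementary route than the paper's. The paper works at the level of the eigenfunction: it uses the (just-stated) estimates \eqref{maj1} and \eqref{maj2} to localize the normalized eigenfunction $u_{m,p}$, derives the self-referential inequality $\lambda_m(\xi)\geqslant \tfrac{k_m}{R_0^2}\bigl(1-(R_0-\xi)^{-2}\lambda_m(\xi)\bigr)$, solves it for $\lambda_m(\xi)$, and then lets $R_0\to+\infty$ after sending $m\to+\infty$. You instead bound the potential pointwise from below — writing $(r-\xi)^2\geqslant\tfrac12 r^2-\xi^2$ (equivalently $\tfrac12(r-2\xi)^2\geqslant 0$) and then AM-GM on $k_m/r^2+r^2/2$ — and feed that into the Rayleigh quotient, bypassing the eigenfunction entirely. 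Your argument is shorter, avoids the hidden dependence of ``$m$ large enough'' on $R_0$, and yields the cleaner explicit bound $\lambda_{m,p}(\xi)\geqslant\sqrt{2k_m}-\xi^2$ uniform in $p$. What the paper's longer route buys is that the inequalities \eqref{maj1}--\eqref{maj2} and the resulting $\mathrm{L}^2$-concentration of $u_m$ are precisely the tools reused immediately afterwards in the proof of Proposition \ref{loc_fr} (the control of $\xi_m$), so the paper is amortizing the setup rather than proving this proposition in the fastest possible way.

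One small technicality worth making explicit: the infimum defining $\lambda_{m,1}(\xi)$ should really range over the form domain $\domaine{l_m}$ rather than the operator domain $\domaine{L_m(\xi)}$, though the value is the same since the latter is a form core. This does not affect the argument.
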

\begin{proof}
We simplify the notations by omitting the index $p$. According to estimate \eqref{maj1}, 
\begin{equation}\label{maj3}
\lambda_{m}\paren{\xi}\geqslant\frac{k_{m}}{R_{0}^{2}}\int_{0}^{R_{0}}{\abs{u_{m}\paren{r,\xi}}^{2}dr}=
\frac{k_{m}}{R_{0}^{2}}\paren{1-\int_{R_{0}}^{+\infty}{\abs{u_{m}\paren{r,\xi}}^{2}dr}},\quad R_{0}>0.
\end{equation}
Moreover, if $R_{0}\geqslant \xi$, then 
\begin{equation*}\displaystyle\int_{0}^{+\infty}{\paren{r-\xi}^{2}\abs{u_{m}\paren{r,\xi}}^{2}}\geqslant\paren{R_{0}-\xi}^{2}\int_{R_{0}}^{+\infty}{\abs{u_{m}\paren{r,\xi}}^{2}dr}.
\end{equation*}
Therefore, from estimate \eqref{maj2} we deduce that 
\begin{equation}\label{maj4}
\lambda_{m}\paren{\xi}\geqslant\paren{R_{0}-\xi}^{2}\int_{R_{0}}^{+\infty}{\abs{u_{m}\paren{r,\xi}}^{2}dr},\quad R_{0}\geqslant \xi.
\end{equation}
Therefore, combining estimates \eqref{maj3} and \eqref{maj4} we obtain 
\begin{equation*}\lambda_{m}\paren{\xi}\geqslant
\frac{k_{m}}{R_{0}^{2}}\paren{1-\paren{R_{0}-\xi}^{-2}\lambda_{m}\paren{\xi}},\quad R_{0}\geqslant\xi.
\end{equation*}
Hence, recalling that $k_{m}\to+\infty$ as $m\to+\infty$, we deduce that 
\begin{equation*}
\exists M\in\mathbb{N}\vi\forall m\geqslant M,\quad
\lambda_{m}\paren{\xi}\geqslant\frac{k_{m}}{R_{0}^{2}}\paren{1+k_{m}\paren{R_{0}\paren{R_{0}-\xi}}^{-2}}^{-1}\geqslant\frac{\paren{R_{0}-\xi}^{2}}{2}.
\end{equation*}
This is true for all $R_{0}>\xi$. So letting $R_{0}$ tend to $+\infty$ provides the result.
\end{proof}
We now study $\lambda_{m,p}'\paren{\xi}$. 
Remember that for any $m\in\mathbb{N}$ and for any $p\in\mathbb{N}$, $\lambda_{m,p}$ is decreasing from $+\infty$ to $E_{p}$. Therefore 
\begin{equation}
\label{defi_xi_m}
\forall m\in\mathbb{N}\vi\forall p\in\mathbb{N}\vi\forall E>E_{p},\quad\exists ! \xi_{m}\in\mathbb{R},\quad E=\lambda_{m,p}\paren{\xi_{m}}.
\end{equation}
\begin{remarque}
Note that $\xi_{m}$ depends on $E$ and $p$.\par
\end{remarque}
\subsubsection{Preliminary results: some localization properties}
\label{sous_part_loc_fr}
First we look for the behavior of $\xi_{m}$ when $m$ tends to $+\infty$.
\begin{prop}[Control of $\xi_{m}$]
\label{loc_fr}
There exist constants $K_{\pm}>0$ such that as $m$ gets large,
\begin{equation*}
K_{-}\sqrt{k_{m}}\leqslant\xi_{m}\leqslant K_{+}\sqrt{k_{m}}.
\end{equation*}
\end{prop}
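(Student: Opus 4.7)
The plan is to establish the two inequalities separately: the lower bound by re-using the $\paren{R_{0},\xi}$-trick of Proposition \ref{theo_lim_lam_m} with an optimized choice of $R_{0}$, and the upper bound by applying the min-max principle to a $p$-dimensional test space built from translated Hermite functions. A preliminary observation is that the proof of Proposition \ref{theo_lim_lam_m} is actually uniform on bounded $\xi$-intervals (with the choice $R_{0}=\xi+M$ it gives $\lambda_{m,p}\paren{\xi}\geqslant M^{2}/2$ for $m$ large, uniformly on $\segment{-M}{M-1}$), so $\xi_{m}\to+\infty$. In particular $\xi_{m}>0$ for $m$ large, which legitimizes the computations below.

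For the lower bound, I would combine \eqref{maj1} and \eqref{maj2} with $R_{0}=2\xi>\xi$, exactly as in the proof of Proposition \ref{theo_lim_lam_m}, to get
\begin{equation*}
\lambda_{m,p}\paren{\xi}\geqslant\frac{k_{m}}{4\xi^{2}+k_{m}/\xi^{2}},\quad\xi>0.
\end{equation*}
Plugging $\xi=\xi_{m}$ and $\lambda_{m,p}\paren{\xi_{m}}=E$ rearranges to $4E\xi_{m}^{2}/k_{m}+E/\xi_{m}^{2}\geqslant 1$. Since $\xi_{m}\to+\infty$ the term $E/\xi_{m}^{2}$ tends to $0$, so eventually $4E\xi_{m}^{2}/k_{m}\geqslant 1/2$, which yields $\xi_{m}\geqslant K_{-}\sqrt{k_{m}}$ for $K_{-}$ any positive constant below $1/\paren{2\sqrt{2E}}$ and $m$ large.

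For the upper bound I would apply min-max to the trial space $V_{\xi}:=\mathrm{span}\paren{\phi_{1}\paren{\cdot,\xi},\dots,\phi_{p}\paren{\cdot,\xi}}$, where
\begin{equation*}
\phi_{j}\paren{r,\xi}:=\chi_{\xi}\paren{r-\xi}\Psi_{j}\paren{r-\xi},\quad j\in\entier{1}{p},
\end{equation*}
with $\Psi_{j}$ the $j$-th normalized Hermite function and $\chi_{\xi}$ the cut-off of \eqref{cuttoff}. For $\xi$ large, each $\phi_{j}\paren{\cdot,\xi}$ is smooth and supported in $\intgd{\xi/2}{3\xi/2}$, hence lies in $\domaine{L_{m}\paren{\xi}}$. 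Thanks to the super-exponential decay of the $\Psi_{j}$, the Gram matrix of $\paren{\phi_{j}\paren{\cdot,\xi}}_{j}$ equals $I_{p}+O\paren{\xi^{-\infty}}$, and the harmonic-oscillator piece of $l_{m}$ is diagonal on $V_{\xi}$ with entries $E_{j}+O\paren{\xi^{-\infty}}$. Since $r\geqslant \xi/2$ on the support of each $\phi_{j}$, the centrifugal contribution obeys
\begin{equation*}
\int_{0}^{+\infty}\frac{k_{m}}{r^{2}}\abs{\phi\paren{r,\xi}}^{2}dr\leqslant\frac{4k_{m}}{\xi^{2}}\norme{\phi}_{2}^{2},\quad\phi\in V_{\xi}.
\end{equation*}
Combining these estimates and invoking min-max yields $\lambda_{m,p}\paren{\xi}\leqslant E_{p}+4k_{m}/\xi^{2}+O\paren{\xi^{-\infty}}$ as $\xi\to+\infty$. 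Specializing at $\xi=\xi_{m}$ (legitimate since $\xi_{m}\to+\infty$) and rearranging gives $\xi_{m}\leqslant K_{+}\sqrt{k_{m}}$ for any $K_{+}>2/\sqrt{E-E_{p}}$ and $m$ large.

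The lower bound is essentially an adaptation of Proposition \ref{theo_lim_lam_m}; the more delicate step is the upper bound, where I have to check that the full matrix of $l_{m}$ on the $p$-dimensional space $V_{\xi}$ is bounded above by $\paren{E_{p}+4k_{m}/\xi^{2}}I_{p}$ up to negligible corrections. The main obstacle is the control of the off-diagonal centrifugal entries $\int k_{m}r^{-2}\phi_{i}\phi_{j}dr$ and of the commutator contributions from $\chi_{\xi}'$; both are handled by Cauchy--Schwarz together with the Gaussian decay of the $\Psi_{j}$, but the bookkeeping of the $p$ entries must preserve the leading order $k_{m}/\xi^{2}$ so that the final constant $K_{+}$ is positive and finite.
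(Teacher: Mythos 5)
Your proof is correct, and the lower bound is essentially the paper's argument: both use \eqref{maj1} and \eqref{maj2} to localize the eigenfunction; you organize it into a single inequality with $R_{0}=2\xi$, while the paper runs the same comparison with parameters $\alpha,\varepsilon$ and a small contradiction. The only cosmetic point is that you do not need the uniformity-over-bounded-$\xi$-intervals remark to get $\xi_{m}\to+\infty$: since $\lambda_{m,p}$ is decreasing in $\xi$ and $\lambda_{m,p}\paren{\xi_{m}}=E$, Proposition \ref{theo_lim_lam_m} applied at any fixed $\xi$ already forces $\xi_{m}$ to leave every bounded set.

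Your upper bound, however, takes a genuinely different route from the paper. The paper argues by contradiction: assuming $\xi_{m}/\sqrt{k_{m}}\to+\infty$ along a subsequence, it reuses the cut-off Hermite quasi-modes of Subsection \ref{sect_asympt_xi_inf} together with the spectral theorem to show that the spectrum of $L_{m}$ concentrates near the Landau levels, and then identifies $\lambda_{m,q}\paren{\xi_{m}}\to E_{q}$ (a step that implicitly requires a small eigenvalue-counting argument) to reach $E=E_{p}$, contradicting $E>E_{p}$. You instead feed the same cut-off translated Hermite functions into the min-max principle as a $p$-dimensional trial space and obtain the direct quantitative bound $\lambda_{m,p}\paren{\xi}\leqslant E_{p}+4k_{m}\xi^{-2}+O\paren{\xi^{-\infty}}$, uniformly in $m$ since the trial functions, cut-off, Gram matrix, and commutator errors are all $m$-independent and the centrifugal piece is estimated pointwise by $4k_{m}\xi^{-2}$ on $\mathrm{supp}\,\phi_{j}\subset\paren{\xi/2,3\xi/2}$. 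Specializing at $\xi=\xi_{m}$ and using $E>E_{p}$ closes the argument. Your version is slightly more direct: it avoids the spectral-theorem / eigenvalue-identification step and produces an explicit constant $K_{+}$; it pays for this with the bookkeeping of the $p\times p$ form and Gram matrices, which you correctly flag. Both proofs rest on the same quasi-mode construction; the difference is whether one deduces an operator-norm localization of $\sigma\paren{L_{m}}$ (paper) or an eigenvalue upper bound via min-max (yours).
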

To get the lower bound, we use formula \eqref{fct_band_quad_form} and we localize the normalized eigenfunctions 
$u_{m}:=u_{m}\paren{\cdot,\xi_{m}}$ of $L_{m}:=L_{m}\paren{\xi_{m}}$.\par 
\begin{proof}[of the lower bound]
Let $\alpha\in\intd{0}{1}$ and let $\displaystyle R_{m}\paren{\alpha}:=\sqrt{k_{m}\alpha E^{-1}}$. 
We inject $\lambda_{m}\paren{\xi_{m}}=E$ into estimate \eqref{maj1}. 
It yields 
$$\frac{E}{k_{m}}\geqslant \int_{0}^{+\infty}{\frac{\abs{u_{m}\paren{r}}^{2}}{r^{2}}dr}\geqslant
\int_{0}^{R_{m}\paren{\alpha}}{\frac{\abs{u_{m}\paren{r}}^{2}}{r^{2}}dr}\geqslant
\frac{1}{R_{m}\paren{\alpha}^{2}}\int_{0}^{R_{m}\paren{\alpha}}{\abs{u_{m}\paren{r}}^{2}dr}.$$\par
So
\begin{equation}
\label{eq_loc_inf}
\int_{0}^{R_{m}\paren{\alpha}}{\abs{u_{m}\paren{r}}^{2}dr}\leqslant\alpha.
\end{equation}\par
Let $\varepsilon>0$ and let $\displaystyle C\paren{\varepsilon}:=\sqrt{E\varepsilon^{-1}}$. 
We make use of estimate \eqref{maj2} to prove in the same way that,
\begin{equation}
\label{eq_loc_xi}
\int_{\left\{\abs{r-\xi_{m}}\leqslant C\paren{\varepsilon}\right\}}{\abs{u_{m}\paren{r,\xi_{m}}}^{2}dr}\geqslant1-\varepsilon.
\end{equation}\par
We combine these estimates to derive an upper bound for $\xi_{m}$. 
Let $\paren{\varepsilon,\alpha}\in\intgd{0}{1}^{2}$ such that $1-\varepsilon >\alpha$. 
We assume that for some $m\in\mathbb{N}$, 
\begin{equation}
\label{eq_absurde_1}
\intgd{\xi_{m}-C\paren{\varepsilon}}{\xi_{m}+C\paren{\varepsilon}}\subset\intgd{0}{R_{m}\paren{\alpha}}.
\end{equation}
We deduce from estimates \eqref{eq_loc_inf} and \eqref{eq_loc_xi} that 
\begin{equation*}
1-\varepsilon\leqslant\int_{-C\paren{\varepsilon}+\xi_{m}}^{C\paren{\varepsilon}+\xi_{m}}{\abs{u_{m}\paren{r,\xi_{m}}}^{2}dr}\leqslant
\int_{0}^{R_{m}\paren{\alpha}}{\abs{u_{m}\paren{r,\xi_{m}}}^{2}dr}
\leqslant \alpha.
\end{equation*}\par
So hypothesis \eqref{eq_absurde_1} can not hold. 
Moreover according to Proposition \ref{theo_lim_lam_m}, $\xi_{m}\to+\infty$ as $m\to+\infty$. 
Therefore for $m$ large enough $\xi_{m}-C\paren{\varepsilon}\geqslant 0$. Hence,
\begin{equation*}
\exists M>0\vi
\forall m\geqslant M,\quad C\paren{\varepsilon}+\xi_{m}\geqslant R_{m}\paren{\alpha}.
\end{equation*}\par
Thus we deduce the existence of $K_{-}$.
\end{proof}
\begin{proof}[of the upper bound] 
We now examine the second part of Proposition \ref{loc_fr}: we show that $\paren{\xi_{m}k_{m}^{-1/2}}_{m\in\mathbb{N}}$ admits an upper bound.
The key argument is $E\neq E_{p}$. 
Indeed we prove that if $\xi_{m}$ tends too fast to $+\infty$, the limit operator is a quantum harmonic oscillator whose eigenvalues are the Landau levels. 
Let's assume that the sequence $\displaystyle\paren{\xi_{m}k_{m}^{-1/2}}_{m\in\mathbb{N}}$ admits no upper bounds. Up to an extraction, one can assume that 
\begin{equation}\label{hyp_abs_up_bou}
\displaystyle\lim_{m\to+\infty}{\frac{\xi_{m}}{\sqrt{k_{m}}}}=+\infty.
\end{equation}\par
Recall (see Subsection \ref{sect_asympt_xi_inf}) that $H_{0}$ is the quantum harmonic oscillator acting on $\lp{2}{\mathbb{R}}$ 
and that the operator $L_{m}$ is unitarily equivalent to the following operator acting on $\lp{2}{-\xi_{m},+\infty}$:
\begin{equation*}
H_{0}+\paren{\frac{\sqrt{k_{m}}}{\xi_{m}}}^{2}\frac{1}{\paren{1+\frac{s}{\xi_{m}}}^{2}}.
\end{equation*}\par
Let $\paren{E_{q},\Psi_{q}}_{q\in\mathbb{N}}$ be the eigenpairs of $H_{0}$. 
For any $m\in\mathbb{N}$, $q\in\mathbb{N}$, we use the functions $\chi_{\xi_{m}}$ and $u_{m}^{1}\paren{\cdot,\xi_{m}}$ defined by formulas \eqref{cuttoff} and \eqref{quasimod}. 
Note that $\chi_{\xi_{m}}\paren{H_{0}-E_{p}}\Psi_{q}=0$, 
therefore according to estimates \eqref{eq_norme_decomp_ope} and \eqref{controle_reste}, 
\begin{equation*}
\norme{\paren{L_{m}-E_{q}} u^{1}_{m,q}}_{2}\leqslant
\norme{\left[H_{0},\chi_{m}\right]\Psi_{q}}_{2}+4\paren{\frac{\sqrt{k_{m}}}{\xi_{m}}}^{2},\quad q\in\mathbb{N}.
\end{equation*}\par
Moreover, 
\begin{equation*}\norme{\left[H_{0},\chi_{m}\right]\Psi_{q}}_{2}=O\paren{\frac{1}{\xi_{m}^{\infty}}}.
\end{equation*}\par
Recall that $\norme{u_{m}^{1}\paren{\cdot,\xi_{m}}}\to 1$ as $m\to+\infty$ (remember that $\xi_{m}\to+\infty$ as $m\to+\infty$ and see the identity \eqref{lim_norme_quasi_mod}) and that we have assumed that $\sqrt{k_{m}}\xi_{m}^{-1}\to 0$ as $m\to+\infty$. 
We thus conclude from the spectral theorem that $$\lim_{m\to+\infty}{d\paren{\sigma\paren{L_{m}},E_{q}}}=0.$$
It implies that for every $q\in\mathbb{N}\vi d\paren{\left\{\lambda_{m,s}\paren{\xi_{m}},s\geqslant 1\right\},E_{q}} \to 0$ as $m\to +\infty$. 
So for every $q\in\mathbb{N}$, $\lambda_{m,q}\paren{\xi_{m}}\to E_{q}$ as $m\to+\infty$, therefore $E=E_{p}$. 
But we have assumed that $E\neq E_p$, hence the hypothesis \eqref{hyp_abs_up_bou} can not hold and we get the upper-bound.\par
\end{proof}
We now study the potential $V_{m}$, defined by formula \eqref{defi_potentiel}. 
Note that $V_{m}$ is strictly convex and that it verifies $V_{m}\paren{r} \to +\infty$ as $r\to 0$ or $r\to+\infty$. Therefore $V_{m}$ admits an unique minimum on $\mathbb{R}_{+}$, $V_{m}^{\min}$, reached at the single critical point of $V_{m}$: $r_{m}$. In Lemma \ref{loc_extr}, we use Proposition \ref{loc_fr} to localize the quantities $r_{m}$ and $V_{m}^{\min}$.
\begin{lemme}[Localization of extrema]
\label{loc_extr}
There are constants $M\in\mathbb{N}$, $R_{\pm}>0$ and $V_{\pm}>0$ such that for every $m\geqslant M$,
\begin{enumerate}
\item $R_{-}\sqrt{k_{m}}\leqslant r_{m}\leqslant R_{+}\sqrt{k_{m}}$;
\item $V_{-}\leqslant V_{m}^{\min}\leqslant V_{+}$.
\end{enumerate}
Moreover, for any $y>V_{m}^{\min}$, the two solutions $r_{\pm}$ of $V_{m}\paren{r}=y$ satisfy:
\begin{equation*}
\exists K_{\pm}>0\vi\exists M\in\mathbb{N}\vi\forall m\geqslant M,\quad K_{-}\sqrt{k_{m}}\leqslant r_{-}< r_{m}<r_{+}\leqslant K_{+}\sqrt{k_{m}}.
\end{equation*}\par
\end{lemme}
\begin{proof}\ \par
\begin{enumerate}
\item 
First, recall that $r_{m}\in\mathbb{R}_{+}$ is the single critical point of $V_{m}$. 
Therefore, $V_{m}'\paren{r_{m}}=0$ provides
\begin{equation*}\frac{k_{m}}{r_{m}^{3}}=r_{m}-\xi_{m}.
\end{equation*}\par
Since $r_{m}>0$, we deduce that $r_{m}-\xi_{m}>0$.
So according to Proposition \ref{loc_fr}
\begin{equation*}
\exists K_{+}>0\vi\exists M\in\mathbb{N}\vi\forall m\geqslant M,\quad r_{m}> \xi_{m}\geqslant K_{+}\sqrt{k_{m}}.
\end{equation*}
Moreover $\displaystyle0<r_{m}-\xi_{m}\leqslant k_{m}\paren{K_{+}\sqrt{k_{m}}}^{-3}=K_{+}^{-3}k_{m}^{-1/2}$. 
So using $k_{m}\to+\infty$ as $m\to+\infty$, we deduce that 
\begin{equation}
\label{lim_r_xi}
\lim_{m\to+\infty}{r_{m}-\xi_{m}}=0.
\end{equation}
Thus Proposition \ref{loc_fr} provides the result.
\item Recall that $V_{m}^{\min}=V_{m}\paren{r_{m}}$. Hence, according to equation \eqref{lim_r_xi}, 
$V_{m}^{\min}-k_{m}r_{m}^{-2}\to 0$ as $m\to+\infty$. So the first point provides the result.\par
\item 
According to the variations of $V_{m}$, $r_{\pm}$ exists and is solution of the equation $k_{m}r^{-2}+\paren{r-\xi_{m}}^{2}=y$. 
Thus $\paren{r_{\pm}-\xi_{m}}^{2}\leqslant y$ and therefore $\abs{r_{\pm}-\xi_{m}}\leqslant\sqrt{y}$. The result follows from Proposition \ref{loc_fr}.
\end{enumerate}
\end{proof}
\begin{remarque}
We do not know if the limits $\displaystyle\lim_{m\to+\infty}{\frac{\xi_{m}}{\sqrt{k_{m}}}}$ and $\displaystyle\lim_{m\to+\infty}{V_{m}^{\min}}$ exist.
\end{remarque}
\subsubsection{Exponential decay of the eigenfunctions}
\label{sub_sub_sec_agm_est}
Here we introduce some tools to estimate the exponential decay of the eigenfunctions. This is an application of the well-known Agmon estimates for 1D Schrödinger operators with confining potential. In our case we would like to take into account the dependance on $m$. Therefore we are led to perturb the Agmon distance to get some uniform estimates.\par
We define the Agmon distance by: 
\begin{equation*}
d_{m}\paren{r_{1},r_{2}}=\abs{\int_{r_{1}}^{r_{2}}{\sqrt{\paren{V_{m}\paren{r}-E}_{+}}dr}},\quad\paren{r_{1},r_{2}}\in\mathbb{R}_{+}^{2}.
\end{equation*}\par
For $\alpha>3/2$ and for every $m\in\mathbb{N}$, we define $\delta_{m}$ by
\begin{equation*}
\delta_{m}=\delta_{m}\paren{\alpha}:=\frac{\alpha}{\sqrt{k_{m}}}.
\end{equation*}\par
Let $I_{m}$ be defined by
\begin{equation}
\label{defi_I_m}
I_{m}=I_{m}\paren{E}:=\left\{r>0,\quad V_{m}\paren{r}<E\right\}.
\end{equation} 
We recall that we have chosen $E>E_{p}$, therefore $I_{m}\neq\emptyset$. Indeed,
\begin{equation}
\label{prop_e_supp_v_min}
E= l_{m}\paren{u_{m}}>\int_{\mathbb{R_{+}}}{V_{m}\abs{u_{m}}^{2}}\geqslant V_{m}^{\min}\norme{u_{m}}_{2}^{2}=V_{m}^{\min}.
\end{equation}\par
Furthermore, remember that $V_{m}$ is strictly convex and that $V_{m}\paren{r}\to+\infty$ as $r\to 0$. 
Therefore $I_{m}$ is an open bounded interval of $\mathbb{R}_{+}$. 
Recall that the distance between $x\in\mathbb{R}$ and a set $X\subset\mathbb{R}$ is defined as $d_{m}\paren{x,X}:=\inf\paren{d_{m}\paren{x,y}\vi y\in X}$. For every $m\in\mathbb{N}$, we define the function $\Phi_{m}$ on $\mathbb{R}_{+}$ by
\begin{equation}
\label{defi_phi_m_p}
\Phi_{m}=\Phi_{m}\paren{\cdot,\delta_{m}}:=\delta_{m}d_{m}\paren{\cdot,I_{m}}.
\end{equation}\par
The function $\Phi_{m}$ is decreasing on $\intgd{0}{\inf\paren{I_{m}}}$, zero on $I_{m}$ and increasing on $\intgd{\sup\paren{I_{m}}}{+\infty}$. 
Moreover since $I_{m}$ is a bounded interval, we deduce that
\begin{equation*}
\begin{array}{ll}
\displaystyle \Phi_{m}\paren{r}=\delta_{m}\int_{r}^{\inf\paren{I_{m}}}{\sqrt{\paren{V_{m}\paren{r}-E}_{+}}dr},&r<\inf\paren{I_{m}},\\
\Phi_{m}\paren{r}=0,&r\in I_{m},\\
\displaystyle  \Phi_{m}\paren{r}=\delta_{m}\int_{\sup\paren{I_{m}}}^{r}{\sqrt{\paren{V_{m}\paren{r}-E}_{+}}dr},&r>\sup\paren{I_{m}}.
\end{array}
\end{equation*}
Hence, $\Phi_{m}$ satisfies the eikonal equation:
\begin{equation}
\label{eq_eikon}
\abs{\Phi_{m}'\paren{r}}^{2}=\delta_{m}^{2}\paren{V_{m}\paren{r}-E}_{+}.
\end{equation}\par
Notice that $\Phi_{m}$ is a perturbated Agmon distance and that $\delta_{m}\to 0$ as $m\to +\infty$. 
We use this fact to prove the following proposition that provides a uniform control for $e^{\Phi_{m}}u_{m}$.
First of all we use the definition of $\Phi_{m}$ given by equation \eqref{defi_phi_m_p} and a Taylor expansion at $0$ and at $+\infty$ to get the following lemma.
\begin{lemme}
\label{lemme_behavior_phi}
Let $\Phi_{m}$ be the function defined by definition \eqref{defi_phi_m_p}. The behavior of $\Phi_{m}\paren{r}$ as $r\to\partial\mathbb{R}_{+}$ is given by: 
\begin{itemize}
\item $\Phi_{m}\paren{r}=-\alpha\ln\paren{r}+O\paren{1}$ as $r\to 0$;
\item $\Phi_{m}\paren{r}=\dfrac{\delta_{m}r^{2}}{2}+O\paren{r}$ as $r\to +\infty$.
\end{itemize}
\end{lemme}
The following proposition is a well known Agmon estimate result \cite{Agm82}. 
Here we are interested in the uniformity with respect to $m$. 
To that aim we adapt the classical proof of the result \cite{Hel88}.
\begin{prop}
\label{prop_est_agmon}
There exist a constant $K$ and an integer $M$ such that
\begin{equation*}
\forall m\geqslant M,\quad\norme{e^{\Phi_{m}}u_{m}}_{2}\leqslant K.
\end{equation*}
\end{prop}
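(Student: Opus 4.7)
The plan is to derive the classical Agmon-type identity by multiplying the eigenvalue equation $L_m u_m = E u_m$ by $e^{2\Phi_m^R} u_m$ and integrating, where $\Phi_m^R := \min(\Phi_m, R)$ is a bounded Lipschitz truncation introduced so that all quantities are a priori well-defined (and removed at the end via monotone convergence). After one integration by parts in the kinetic term, combined with the product rule $(e^{\Phi_m^R} u_m)' = e^{\Phi_m^R}(u_m' + (\Phi_m^R)' u_m)$, this produces the identity
\begin{equation*}
\int_0^{+\infty} \bigl|(e^{\Phi_m^R} u_m)'\bigr|^2 dr + \int_0^{+\infty} \bigl(V_m - E - |(\Phi_m^R)'|^2\bigr) e^{2\Phi_m^R} u_m^2 \, dr = \bigl[u_m' u_m e^{2\Phi_m^R}\bigr]_0^{+\infty}.
\end{equation*}
The boundary term at $+\infty$ vanishes thanks to the super-exponential decay of $u_m$ and $u_m'$ recalled after \eqref{app_fey_hel_form}. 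The boundary term at $0$ vanishes as soon as $m \geq M$ is large enough: by Proposition \ref{prop_cond_bord_zero}, $u_m(r) \sim r^{\nu_+}$ with $\nu_+ = (1+|2m+n-3|)/2$, and by Lemma \ref{lemme_behavior_phi}, $e^{2\Phi_m^R(r)} = O(r^{-2\alpha})$ near $0$, so $u_m' u_m e^{2\Phi_m^R} = O(r^{2\nu_+ - 1 - 2\alpha})$, which tends to $0$ provided $\nu_+ > \alpha + 1/2$.

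Next I would exploit the eikonal equation \eqref{eq_eikon} to control the sign of the integrand. On the classically allowed region $I_m$ defined in \eqref{defi_I_m}, both $\Phi_m$ and its truncation vanish; outside $I_m$, $|(\Phi_m^R)'|^2 \leq \delta_m^2(V_m - E)$ almost everywhere, hence $V_m - E - |(\Phi_m^R)'|^2 \geq (1-\delta_m^2)(V_m - E) \geq 0$ for $m$ large enough (since $\delta_m \to 0$). Discarding the nonnegative kinetic term, using $\|u_m\|_2 = 1$, and bounding $V_m^{\min}$ from below through Lemma \ref{loc_extr}, one obtains
\begin{equation*}
(1-\delta_m^2) \int_{\mathbb{R}_+ \setminus I_m} (V_m - E) e^{2\Phi_m^R} u_m^2 \, dr \leq \int_{I_m} (E - V_m) u_m^2 \, dr \leq E.
\end{equation*}

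To convert this into an $L^2$ bound on $e^{\Phi_m^R} u_m$, I would split the integral according to $\{V_m < E+1\}$ and $\{V_m \geq E+1\}$. On the second set, $V_m - E \geq 1$, so the previous inequality yields $\int_{\{V_m \geq E+1\}} e^{2\Phi_m^R} u_m^2 \leq 2E$ for $m$ large enough that $\delta_m^2 \leq 1/2$. For the first set, Lemma \ref{loc_extr} applied at level $y = E+1$ places $\{V_m \leq E+1\}$ inside an interval of length $O(\sqrt{k_m})$ on which $\sqrt{(V_m - E)_+} \leq 1$; integrating this bound yields $\Phi_m(r) \leq \delta_m \cdot O(\sqrt{k_m}) = O(\alpha)$ uniformly in $m$, so $e^{2\Phi_m^R} \leq C$ on that set and $\int_{\{V_m < E+1\}} e^{2\Phi_m^R} u_m^2 \leq C$. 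Summing and letting $R \to +\infty$ by monotone convergence delivers the uniform estimate with $K^2 = C + 2E$.

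The main difficulty is the uniformity in $m$: both the localization of the classically allowed region and the potential barrier shift with $m$, and the precise calibration $\delta_m = \alpha/\sqrt{k_m}$ from \eqref{defi_phi_m_p} is exactly what keeps the Agmon weight bounded on the buffer $\{V_m < E+1\}$, whose length grows like $\sqrt{k_m}$. A secondary technical point is the vanishing of the boundary term at $r = 0$, which is what forces the threshold $m \geq M$ and implicitly uses the hypothesis $\alpha > 3/2$ via the comparison $\nu_+ > \alpha + 1/2$ for $m$ large.
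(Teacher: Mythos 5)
Your proof is correct and follows the same Agmon-estimate strategy as the paper: same identity, same eikonal-equation sign argument, same use of Lemma \ref{loc_extr} and the calibration $\delta_m = \alpha/\sqrt{k_m}$ to keep $e^{2\Phi_m}$ uniformly bounded on the classically allowed region. The only differences are packaging: you truncate the weight and invoke monotone convergence where the paper first establishes a priori integrability of $e^{\Phi_m}u_m$ via Liouville--Green, and you split at the fixed level $E+1$ where the paper splits at the $m$-dependent level $E+\varepsilon_m$ with $\varepsilon_m=(E-V_m^{\min})/2$; both choices deliver the same uniform bound.
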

\begin{proof}
According to Lemma \ref{lemme_behavior_phi}, there is a constant $\beta\in\mathbb{R}$ such that, 
\begin{equation*}
\begin{array}{ll}
e^{2\Phi_{m}\paren{r}}=O\paren{r^{-2\alpha}}& r\to 0;\\
e^{2\Phi_{m}\paren{r}}=O\paren{e^{\frac{\delta_{m}}{2}r^{2}+\beta r}}& r\to +\infty.\\
\end{array}
\end{equation*}
Hence according to Proposition \ref{prop_cond_bord_zero}, 
\begin{equation*}
e^{2\Phi_{m}\paren{r}}u_{m}\paren{r}=O\paren{r^{m+\frac{n}{2}-1-2\alpha}},\quad r\to 0.
\end{equation*}
Therefore for $m$ large enough, $e^{2\Phi_{m}}u_{m}\in\lp{2}{0,1}$.
Moreover according to the Liouville-Green approximation \cite[Chapter 6]{Olv97}, 
\begin{equation*}
u_{m}\paren{r}\sim \paren{V_{m}\paren{r}-E}^{-\frac{1}{4}}e^{-\int{\sqrt{V_{m}\paren{r}-E}dr}},\quad r\to+\infty.
\end{equation*}
Remember that $\int{\sqrt{V_{m}\paren{r}-E}dr}\sim r^{2}/2$ as $r\to+\infty$, we deduce that for $m$ large enough, $e^{\phi_{m}}u_{m}\in\lp{2}{1,+\infty}$, 
therefore $e^{2\phi_{m}}u_{m}\in\lp{2}{\mathbb{R}_{+}}$. 
Moreover an integration by parts yields
\begin{equation*}
\scalaire{-u_{m}''}{e^{2\phi_{m}}u_{m}}=
\int_{\mathbb{R}_{+}}{
\paren{\abs{u_{m}'}^{2}+2\phi_{m}'u_{m}u_{m}'}e^{2\phi_{m}}
}-\left[e^{2\phi_{m}}u_{m}u_{m}'\right]_{0}^{+\infty}.
\end{equation*}
According to what preceds, $\left[e^{2\phi_{m}}u_{m}u_{m}'\right]_{0}^{+\infty}=0$, 
thus $e^{\phi_{m}}u_{m}\in\domaine{h_{m}}$. 
Moreover by combining it with the relations \eqref{fct_band_quad_form} and \eqref{defi_xi_m} we obtain
\begin{equation}
\label{est_base_agmon}
\int_{\mathbb{R}_{+}}{\abs{\paren{e^{\Phi_{m}}u_{m}}'}^{2}}+\int_{\mathbb{R}_{+}}{e^{2\Phi_{m}}\paren{V_{m}-E-\abs{\phi_{m}'}^{2}}\abs{u_{m}}^{2}}=0.
\end{equation}\par
Furthermore, according to estimate \eqref{prop_e_supp_v_min}, $E-V_{m}^{\min}>0$. 
Let for every $m\in\mathbb{N}$, $\varepsilon_{m}:=2^{-1}\paren{E-V_{m}^{\min}}>0$.
Recall that $I_{m}$ is given by definition \eqref{defi_I_m}. We define $I_{\pm}$ as
\begin{equation*}
\begin{array}{l}
I_{-}:=I_{m}\paren{E+\varepsilon_{m}}=\left\{r\in\mathbb{R}_{+}\vi V_{m}\paren{r}<E+\varepsilon_{m}\right\},\\
I_{+}:=\mathbb{R}_{+}\backslash I_{-}=\left\{r\in\mathbb{R}_{+}\vi V_{m}\paren{r}\geqslant E+\varepsilon_{m}\right\}.
\end{array}
\end{equation*}\par
By injecting $\mathbb{R}_{+}=I_{+}\sqcup I_{-}$ into equation \eqref{est_base_agmon}, we prove that
\begin{equation*}
\label{rel_fond_pre_hag}
\begin{array}{c}
\displaystyle \int_{\mathbb{R}_{+}}{\abs{\paren{e^{\phi_{m}}u_{m}}'}^{2}}+\int_{I_{+}}{e^{2\phi_{m}}\paren{V_{m}-E-\abs{\phi_{m}'}^{2}}\abs{u_{m}}^{2}}=\\
\displaystyle-\int_{I_{-}}{e^{2\phi_{m}}\paren{V_{m}-E-\abs{\phi_{m}'}^{2}}\abs{u_{m}}^{2}}\leqslant
\norme{V_{m}-E-\abs{\phi_{m}'}^{2}}_{\lp{\infty}{I_{-}}}
\int_{I_{-}}{e^{2\phi_{m}}\abs{u_{m}}^{2}}..
\end{array}
\end{equation*}\par
Let for $m$ large enough such that $\delta_{m}\leqslant 1$, $\displaystyle C_{m}:=\paren{1-\delta_{m}^{2}}\varepsilon_{m}>0$. 
We combine equation \eqref{eq_eikon} with the definition of $I_{\pm}$ to get
\begin{equation*}
\begin{array}{ll}
V_{m}\paren{r}-E-\abs{\Phi_{m}'\paren{r}}^{2}\geqslant C_{m},&\text{if } r\in I_{+};\\
V_{m}^{min}-E\leqslant V_{m}\paren{r}-E-\abs{\Phi_{m}'\paren{r}}^{2}<C_{m},&\text{if } r\in I_{-}.\\
\end{array}
\end{equation*}\par
So remembering that
$\varepsilon_{m}=\paren{E-V_{m}^{\min}}/2$, we get $\norme{V_{m}-E-\abs{\Phi_{m}'}^{2}}_{\lp{\infty}{I_{-}}}\leqslant E-V_{m}^{\min}$ 
and we deduce that
\begin{equation*}
\label{rel_fond_hag}
\begin{array}{c}
\displaystyle \int_{\mathbb{R}_{+}}{\abs{\paren{e^{\Phi_{m}}u_{m}}'}^{2}}+C_{m}\int_{I_{+}}{e^{2\Phi_{m}}\abs{u_{m}}^{2}}\leqslant
\paren{E-V_{m}^{\min}}\int_{I_{-}}{e^{2\Phi_{m}\paren{r}}\abs{u_{m}}^{2}}.
\end{array}
\end{equation*}\par
We recall that $u_{m}$ is normalized that provides
\begin{equation*}
\displaystyle \int_{\mathbb{R}_{+}}{\abs{\paren{e^{\Phi_{m}}u_{m}}'}^{2}}+
C_{m}\int_{\mathbb{R}_{+}}{e^{2\Phi_{m}}\abs{u_{m}}^{2}}\leqslant
\displaystyle \paren{E-V_{m}^{\min}+C_{m}}\int_{I_{-}}{e^{2\Phi_{m}}\abs{u_{m}}^{2}}\leqslant 
\paren{E-V_{m}^{\min}+C_{m}}e^{2\norme{\Phi_{m}}_{\lp{\infty}{I_{-}}}}.
\end{equation*}\par
Finally we deduce the following estimate
\begin{equation}
\label{ctrl_fct_agmon}
\int_{\mathbb{R}_{+}}{e^{2\Phi_{m}\paren{r}}\abs{u_{m}\paren{r}}^{2}dr}\leqslant
\displaystyle \frac{E-V_{m}^{\min}+C_{m}}{C_{m}}e^{2\norme{\Phi_{m}}_{\lp{\infty}{I_{-}}}}.
\end{equation}\par
The choices of $\delta_{m}$ and $\varepsilon_{m}$ yield $\paren{E-V_{m}^{\min}+C_{m}}C_{m}^{-1}=\paren{3-\delta_{m}^{2}}\paren{1-\delta_{m}^{2}}^{-1}$ . 
Thus $\paren{E-V_{m}^{\min}+C_{m}}C_{m}^{-1} $ is bounded as $m\to +\infty$. 
Moreover the variations of $\Phi_{m}$ ensure that $\norme{\Phi_{m}}_{\lp{\infty}{I_{-}}}=\norme{\Phi_{m}}_{\lp{\infty}{\partial I_{-}}}$. 
Therefore Lemma \ref{loc_extr} provides the following control
\begin{equation*}
\exists K>0\vi \exists M_{0}\in\mathbb{N}\vi\forall m\geqslant M_{0},\quad
\norme{\Phi_{m}}_{\lp{\infty}{I_{-}}}\leqslant K\delta_{m}\sqrt{k_{m}}=K\alpha.
\end{equation*}
We conclude the proof by combining it with estimate \eqref{ctrl_fct_agmon} that provides the expected result.
\end{proof}
\subsubsection{Asymptotic expansion of the derivative}
\label{part_der}
Here we prove the following theorem.
\begin{theo}[Asymptotic behavior of the derivative]
\label{theo_comp_asym_der}
Recall that $\xi_{m}$ is defined by relation \eqref{defi_xi_m}.
There are constants $K_{\pm}>0$ and there exists $M\in\mathbb{N}$ such that
\begin{equation*}
\forall m\geqslant M,\quad \frac{K_{-}}{\sqrt{k_{m}}}\leqslant\abs{\lambda_{m}'\paren{\xi_{m}}}\leqslant \frac{K_{+}}{\sqrt{k_{m}}}.
\end{equation*}
\end{theo}
\begin{remarque}
For further use note that this theorem can be adapted to the case where the energy level is an interval $J$. Namely, 
if $J\subset\mathbb{R}$ denotes an interval such that $\overline{J}\cap\left\{E_{p}\vi p\in\mathbb{N}\right\} =\emptyset$, then
\begin{equation*}
\exists M\in\mathbb{N}\vi\forall m\geqslant M,\forall\xi\in\lambda_{m}^{-1}\paren{J},\quad \frac{K_{-}\paren{J}}{\sqrt{k_{m}}}\leqslant
\abs{\lambda_{m}'\paren{\xi}}
\leqslant \frac{K_{+}\paren{J}}{\sqrt{k_{m}}}.
\end{equation*}
\end{remarque}
\begin{remarque}
If $J$ is on the form $\intgd{E_{p}}{E_{p}+\eta}$, them the combinaison of Theorem \ref{theo_asym_dvp} and of Proposition \ref{loc_fr} states that 
there is a constant $C>0$ such that if $\lambda_{m}\paren{\xi}\in J$ then $\xi\geqslant C\sqrt{k_{m}\eta^{-1}}$. 
Therefore one could prove that 
\begin{equation*}
\exists C>0\vi\exists M\in\mathbb{N}\vi\forall m\geqslant M\vi\forall\xi\in\lambda_{m}^{-1}\paren{J},\quad\abs{\lambda_{m}'\paren{\xi}}\leqslant C\sqrt{\frac{\eta}{k_{m}}}.
\end{equation*}
\end{remarque}
\paragraph{Lower bound}\ \\
\label{sub_part_min_der}
According to Proposition \ref{der_fin_fct_band}, 
\begin{equation*}\displaystyle 
\abs{\lambda_{m}'\paren{\xi}}\geqslant\frac{2k_{m}}{R^{3}}\int_{0}^{R}{\abs{u_{m}\paren{r,\xi}}^{2}dr},\quad\xi\in\mathbb{R}\vi R>0\vi m\geqslant 1.
\end{equation*}\par
Let's combine it with estimate \eqref{maj4} and with $\norme{u_{m}\paren{\cdot,\xi}}=1$. We deduce that
\begin{equation*}
\abs{\lambda_{m}'\paren{\xi}}\geqslant\frac{2k_{m}}{R^{3}}\paren{1-\int_{R}^{+\infty}{\abs{u_{m}\paren{r,\xi}}^{2}dr}}
\geqslant\frac{2k_{m}}{R^{3}}\paren{1-\frac{\lambda_{m}\paren{\xi}}{\paren{R-\xi}^{2}}},\quad R>\xi>0.
\end{equation*}
Remembering that $\displaystyle \lambda_{m,p}\paren{\xi_{m}}=E$, we get 
\begin{equation*}
\label{eq_fond_min}
\abs{\lambda_{m}'\paren{\xi_{m}}}\geqslant\frac{2k_{m}}{R^{3}}\left[1-\frac{E}{\paren{R-\xi_{m}}^{2}}\right],\quad R>\xi_{m}.
\end{equation*}\par
Let us choose $R=R_{m}:=\xi_{m}+\sqrt{2E}>\xi_{m}$. in order to obtain $\displaystyle E\paren{R_{m}-\xi_{m}}^{-2}=1/2$. 
This implies that $\displaystyle\abs{\lambda_{m}'\paren{\xi_{m}}}\geqslant k_{m}R_{m}^{-3}$. 
Observe that $R_{m}\sim \xi_{m}$ as $m\to+\infty$. Therefore Proposition \ref{loc_fr} provides 
\begin{equation*}
\label{rel_min_der}
\exists K>0\vi\exists M\in\mathbb{N}\vi\forall m\geqslant M,\quad\abs{\lambda_{m}'\paren{\xi_{m}}}\geqslant\frac{K}{\sqrt{k_{m}}}.
\end{equation*}
\paragraph{Upper bound}\ \\
Recall that $\Phi_{m}$ is defined by the formula \eqref{defi_phi_m_p}. Let us define the function $\Psi_{m}$ by
\begin{equation*}
\label{def_psi}
\Psi_{m}\paren{r}=\frac{e^{-2\Phi_{m}\paren{r}}}{r^{3}},\quad r>0.
\end{equation*}\par
Let's combine Propositions \ref{der_fin_fct_band} and \ref{prop_est_agmon}. We get that for $m$ large enough
\begin{equation}
\label{app_agmon_der}
\abs{\lambda_{m}'\paren{\xi_{m}}}\leqslant
2K k_{m}\sup_{r\in\mathbb{R}_{+}}{\Psi_{m}\paren{r}}.
\end{equation}\par
Therefore it is enough to prove that there is an integer $M$ and a constant $K>0$ such that 
\begin{equation}
\label{sup_psi_m}
\forall m\geqslant M,\quad\norme{\Psi_{m}}_{\lp{\infty}{\mathbb{R}_{+}}}\leqslant\frac{K}{k_{m}\sqrt{k_{m}}}.
\end{equation}
First note that $\Phi_{m}\geqslant 0$. Therefore for any $r\in\mathbb{R}_{+}$, $\Psi_{m}\paren{r}\leqslant r^{-1}$, meaning that $\Psi_{m}\paren{r}\to 0$ as $r\to+\infty$. Moreover, according to Lemma \ref{lemme_behavior_phi}, $-2\phi_{m}\paren{r}=2\alpha\ln\paren{r}+O\paren{1}$ as $r\to 0$. By combining it with the definition of $\Psi_{m}$, we deduce that $\Psi_{m}\paren{r}=O\paren{r^{2\alpha-3}}$ as $r\to 0$. 
Remembering that $\displaystyle \alpha>3/2$, we conclude that $\Psi_{m}\paren{r}\to 0$ as $r\to 0$. Hence we deduce that 
\begin{equation*}
\exists \tilde{r}_{m}>0,\quad\norme{\Psi_{m}}_{\lp{\infty}{\mathbb{R}}}=\Psi_{m}\paren{\tilde{r}_{m}}.
\end{equation*} 
Furthermore, $\tilde{r}_{m}$ is a critical point of $\Psi_{m}$. Therefore $\Psi_{m}'\paren{\tilde{r}_{m}}=0$ implies that
\begin{equation}
\label{eq_pt_critic}
\Phi_{m}'\paren{\tilde{r}_{m}}=-\frac{3}{2\tilde{r}_{m}}.
\end{equation} 
Observing that $\tilde{r}_{m}>0$, we get $\Phi_{m}'\paren{\tilde{r}_{m}}<0$. Remembering that $\Phi_{m}$ is non decreasing on $\intgd{\inf\paren{I_{m}}}{+\infty}$, we deduce that $\tilde{r}_{m}<\inf\paren{I_{m}}$. 
Note that $\inf\paren{I_{m}}$ is solution of $V_{m}\paren{r}=E$. Therefore Proposition \ref{loc_extr} provides a constant $K_{+}>0$ such that 
\begin{equation*}
\tilde{r}_{m}\leqslant\inf\paren{I_{m}}\leqslant K_{+}\sqrt{k_{m}}.
\end{equation*} \par
Now combine equations \eqref{eq_pt_critic} and \eqref{eq_eikon}. It yields
\begin{equation*}
\delta_{m}^{2}\paren{\frac{k_{m}}{\tilde{r}_{m}^{2}}+\paren{\tilde{r}_{m}-\xi_{m}}^{2}-E}=\frac{9}{4\tilde{r}_{m}^{2}}.
\end{equation*}\par
Hence we get 
\begin{equation}
\label{eq_fin_der_as}
\frac{\delta_{m}^{2}k_{m}-\frac{9}{4}}{\tilde{r}_{m}^{2}}=
\delta_{m}^{2}\paren{E-\paren{\tilde{r}_{m}-\xi_{m}}^{2}}\leqslant \delta_{m}^{2}E.
\end{equation}\par
Remembering that $\delta_{m}\sqrt{k_{m}}=\alpha$, estimate \eqref{eq_fin_der_as} can be written as 
$\displaystyle\paren{\alpha^{2}-9/4}\tilde{r}_{m}^{-2}\leqslant E\alpha^{2}k_{m}^{-1}$. 
Moreover $\displaystyle\alpha>3/2$, so there is a constant $K_{-}>0$ such that for $m$ large enough, 
\begin{equation}
\label{ctrl_r_m_tilde}
\tilde{r}_{m}\geqslant K_{-}\sqrt{k_{m}}. 
\end{equation}
Recall that $\Phi_{m}\paren{\tilde{r}_{m}}\geqslant 0$ meaning that 
$\norme{\Psi_{m}}_{\lp{\infty}{\mathbb{R}_{+}}}=\Psi\paren{\tilde{r}_{m}}\leqslant \tilde{r}_{m}^{-3}$. 
Combine it with the estimate \eqref{ctrl_r_m_tilde} and recall that $\Psi_{m}\paren{r}\to 0$ as $r\to 0$ and as $r\to+\infty$. 
It provides the estimate \eqref{sup_psi_m}. 
Finally we combine the estimates \eqref{app_agmon_der} and \eqref{sup_psi_m} that proves the upper bound.\par
\section{Velocity operator}
\label{sec_ope_cour}
In this section we assume that $n\geqslant 4$. 
The case $n=3$ could also been studied but according to remark \ref{rem_calc_ord_1}, attained thresholds arise in that case. 
We apply the results of the previous section to derive some properties of the current operator.\par
We refer to Section \ref{sect_oper_hami} for notations. 
Remember that $\mathcal{F}$ denotes the partial Fourier transform with respect to $x_{n}$. 
Let $\paren{r,\omega}$ be the cylindrical coordinates of $\mathbb{R}^{n-1}$, 
namely, for any $x\in\mathbb{R}^{n-1}\backslash \{0\}$, $r=\norme{x}_{2}$ and $\omega= r^{-1}x\in\mathbb{S}^{n-2}$. In terms of these variables, $\lp{2}{\mathbb{R}^{n-1}}=\lp{2}{\mathbb{R}_{+}\times\mathbb{S}^{n-2};r^{n-2}dr}$. Let $Y_{m,j}$, $m\geqslant 0$, $j\in\entier{1}{N_{m}}$ be the family of the spherical Harmonics. Remember that these functions form an orthonormal basis of solutions for the equation $-\Delta_{\mathbb{S}^{n-2}}u=\mu_{m}u$, $u\in\lp{2}{\mathbb{S}^{n-2}}$ and denote by $v_{m,p}\paren{\cdot,\xi}$ the eigenfunctions of $H_{m}\paren{\xi}$, $m\geqslant 0$ and $\xi\in\mathbb{R}$.\par
We define the $\paren{m,j,p}$-th generalized Fourier coefficient of $\varphi\in\lp{2}{\mathbb{R}^{n}}$ as
\begin{equation*}
\varphi_{m,j,p}\paren{\xi}:=\dfrac{1}{\sqrt{2\pi}}\int_{\mathbb{R}^{n-1}}{\widehat{\varphi}\paren{r,\omega,\xi}
\overline{Y_{m,j}\paren{\omega}v_{m,p}\paren{r,\xi}}r^{n-2}drd\omega},\quad \varphi\in\lp{2}{\mathbb{R}^{n}}.
\end{equation*}
Moreover for every $m\geqslant 0$, $j\in\entier{1}{N_{m}}$ and $p\in\mathbb{N}$, 
denote by $\pi_{m,j,p}$ the orthogonal projection associated with the $\paren{m,j,p}$-th harmonic 
and by $\pi_{p}$, the projection associated with all the harmonic that have $p$ as level:
\begin{equation*}
\pi_{m,j,p}\paren{\varphi}\paren{x}:=\frac{1}{\sqrt{2\pi}}\int_{\mathbb{R}}{e^{i\xi x_{n}}\varphi_{m,j,p}\paren{\xi}Y_{m,j}\paren{\omega}v_{m,p}\paren{r,\xi}d\xi},\quad x\in\mathbb{R}^{n},
\end{equation*}
\begin{equation*}
\pi_{p}:=\sum_{m=0}^{+\infty}{\sum_{j=1}^{N_{m}}{\pi_{m,j,p}}}.
\end{equation*}
In light of Section \ref{sect_oper_hami}, every $\varphi\in\lp{2}{\mathbb{R}^{n}}$ is decomposed as
\begin{eqnarray*}
\varphi = \sum_{m=0}^{+\infty}{\sum_{j=1}^{N_{m}}{\sum_{p=1}^{+\infty}{\pi_{m,j,p}\paren{\varphi}}}}
= \sum_{p=1}^{+\infty}{\pi_{p}\paren{\varphi}}.
\end{eqnarray*}
Moreover the Parseval theorem yields
\begin{equation}
\label{pars_id}
\norme{\varphi}^{2}_{2}=
\sum_{m=0}^{+\infty}{
\sum_{j=1}^{N_{m}}{
\sum_{p=1}^{+\infty}{\norme{\varphi_{m,j,p}}_{2}^{2}}}.
}
\end{equation}\par
Finally for any non-empty interval $I\subset\mathbb{R}$, denote by $\mathbb{P}_{I}$ the spectral projection of $H$ associated with $I$. 
A quantum state $\varphi\in\lp{2}{\mathbb{R}^{n}}$ is said to be concentrated in $I$ if $\mathbb{P}_{I}\varphi=\varphi$. With reference to Section \ref{sect_oper_hami}, this condition can be written as
\begin{equation}
\label{caract_proj_spec_fct_band}
\forall m\geqslant 0\vi\forall j\in\entier{1}{N_{m}}\vi\forall p\geqslant 1,\quad \text{supp}\paren{\varphi_{m,j,p}}\subset\lambda_{m,p}^{-1}\paren{I}.
\end{equation}\par
Let $x_{n}$ be the position operator defined as the multiplier by coordinate $x_{n}$ in $\lp{2}{\mathbb{R}^{n}}$:
\begin{equation*}
\paren{x_{n}f}x = x_{n} f\paren{x},\quad x=\paren{x_{1},\cdots,x_{n}}\in\mathbb{R}^{n}
\end{equation*}
and let $x_{n}\paren{t}$ be the Heisenberg variable defined as
\begin{equation*}
x_{n}\paren{t}:=e^{itH}x_{n}e^{-itH}.
\end{equation*}\par
A quantum state $\varphi$ is a solution of the Schrödinger equation \eqref{eq_schrodinger}. 
Thus $\varphi\paren{x,t}=e^{-itH}\varphi\paren{x,0}$ and we deduce by a straightforward calculation that
\begin{equation}
\label{eq_just_time_evol_pos}
\scalaire{x_{n}\varphi\paren{\cdot,t}}{\varphi\paren{\cdot,t}}=
\scalaire{x_{n}\paren{t} \varphi\paren{\cdot,0}}{\varphi\paren{\cdot,0}},\quad t\in\mathbb{R}.
\end{equation}
Therefore the time evolution of the position operator $x_{n}$ is $x_{n}\paren{t}$ and its time derivative is the velocity, given by 
\begin{equation}
\label{eq_der_pos_op}
\partial_{t}x_{n}\paren{t}=ie^{itH}\left[H,x_{n}\right]e^{-itH}.
\end{equation}
We define the current operator $J$ as the following self-adjoint operator acting on $\domaine{H}\cap\domaine{x_{n}}$ such that the current carried by a state $\varphi$ is $\scalaire{J\varphi}{\varphi}$. 
\begin{equation}
\label{defi_vel_ope}
J:=-i\left[H,x_{n}\right]=-2\paren{i\partial_{x_{n}}+r}.
\end{equation}
Note that 
\begin{equation}
\label{eq_der_pos_op_bis}
\partial_{t}x_{n}\paren{t}=-e^{itH}Je^{-itH}.
\end{equation}\par
Since $\mathcal{F}$, is an isometry, we observe that $\mathcal{F}J\mathcal{F}^{-1} =-2\paren{r-\xi}$. 
Therefore the Feynman-Hellman formula (see equation \eqref{app_fey_hel_form}) yields
\begin{equation}
\label{app_fibr_cour}
\scalaire{J\pi_{p}\varphi}{\pi_{p}\varphi}=
\sum_{m=0}^{+\infty}{
\sum_{j=1}^{N_{m}}{
\int_{\mathbb{R}}{\lambda_{m,p}'\paren{\xi}\abs{\varphi_{m,j,p}\paren{\xi}}^{2}d\xi}
}
},
\quad p\in\mathbb{N}.
\end{equation}\par
In Theorem \eqref{theo_trans_courr}, we will combine this identity with Theorem \ref{theo_comp_asym_der} to control the current operator. 
We define for each $M\in\mathbb{N}$ and each $p\in\mathbb{N}$:
\begin{eqnarray*}
\displaystyle X_{I,M,p}^{-}:=&\left\{\varphi\in\text{Ran}\paren{\mathbb{P}_{I}}\cap\text{Ran}\paren{\pi_{p}}\vi
\forall m\geqslant M+1\vi\forall j\in\entier{1}{N_{m}}\vi
\varphi_{m,j,p}= 0\right\},\\
\displaystyle X_{I,M,p}^{+}:=&\left\{\varphi\in\text{Ran}\paren{\mathbb{P}_{I}} \cap\text{Ran}\paren{\pi_{p}}\vi
\forall m\leqslant M\vi\forall j\in\entier{1}{N_{m}}\vi
\varphi_{m,j,p}= 0\right\},\\
\displaystyle &X_{I,M}^{-}:=\displaystyle\bigoplus_{p=1}^{+\infty}{X_{I,M,p}^{-}};\\
\displaystyle &X_{I,M}^{+}:=\displaystyle\bigoplus_{p=1}^{+\infty}{X_{I,M,p}^{+}}.\\
\end{eqnarray*}
Note that $\text{Ran}\paren{\mathbb{P}_{I}}=X_{I,M}^{-}\oplus X_{I,M}^{+}$ and that these spaces are $H$ invariant.\par
\begin{theo}
\label{theo_trans_courr}
Let $I\subset\sigma\paren{H}$ be a non-empty interval such that $I\cap\left\{E_{p}\vi p\geqslant 1\right\}=\emptyset$. 
\begin{enumerate}
\item $\forall M\geqslant 0\vi\exists C^{-}>0\vi\forall\varphi\in X_{I,M}^{-},\quad
\abs{\scalaire{J\varphi}{\varphi}}\geqslant C^{-}\norme{\varphi}^{2}_{2}$
\item $\exists C_{+}\vi\exists M_{0}\geqslant 0\vi\forall M\geqslant M_{0},\quad\forall\varphi\in X_{I,M}^{+},\quad
\abs{\scalaire{J\varphi}{\varphi}}\leqslant \dfrac{C^{+}}{\sqrt{k_{M+1}}}\norme{\varphi}^{2}_{2}$.
\end{enumerate}
\end{theo}
\begin{proof}
First of all, observe that $I$ is bounded and recall that $E_{p}\to+\infty$ as $p\to +\infty$. 
Therefore $P_{I}:=\left\{p\in\mathbb{N},\quad E_{p}\geqslant\sup\paren{I}\right\}$ is a finite set. 
Moreover, remembering that for every $m\in\mathbb{N}$ and $p\in\mathbb{N}$, 
$\inf\left\{\lambda_{m,p}\paren{\xi}\in\mathbb{R},\quad\xi\in\mathbb{R}\right\} = E_{p}$, we get that
$P_{I}=\left\{p\in\mathbb{N}\vi\exists m\geqslant 0,\quad I\cap\lambda_{m,p}\paren{\mathbb{R}}\neq\emptyset\right\}$. 
Moreover notice that for every $p\in P_{I}$ and every $m\in\mathbb{N}$, $ I\cap\lambda_{m,p}\paren{\mathbb{R}}\neq\emptyset$. 
Therefore it is enough to prove the theorem for $\varphi\in X_{I,m,p}^{\pm}$ for a certain $p\in P_{I}$ fixed. 
We simplify the notations by ommiting the index $p$.\par
\textbf{Proof of the first part. }
Let $M\geqslant 0$ and let $\varphi\in X_{I,M}^{-}$. 
Note that $\varphi=\pi_{p}\varphi$. 
Therefore, according to the embedding \eqref{caract_proj_spec_fct_band} and to the identity \eqref{app_fibr_cour}, 
\begin{equation*}
\scalaire{J\varphi}{\varphi}=
\sum_{m=0}^{M}{\sum_{j=1}^{N_{m}}{
\int_{\lambda_{m}^{-1}\paren{I}}{\lambda_{m}'\paren{\xi}\abs{\varphi_{m,j}\paren{\xi}}^{2}d\xi}
}}.
\end{equation*}
Moreover $E_{p}\not\in\overline{I}$, thus for every $m\in\entier{0}{M}$, $\lambda^{-1}_{m}\paren{I}$ is bounded. 
Therefore, Proposition \ref{der_fin_fct_band} states that for every $m\in\entier{0}{M}$, 
$D_{m}:=\inf\left\{\lambda_{m}'\paren{\xi}\vi\lambda_{m}\paren{\xi}\in I\right\}>0$. 
Thus $C^{-}:=\inf\left\{D_{m},m\in\entier{0}{M}\right\}>0$.
Hence
\begin{equation}
\label{est_courr}
\abs{\scalaire{J\varphi}{\varphi}}\geqslant
C^{-}\sum_{m=0}^{M}{\sum_{j=1}^{N_{m}}{
\int_{\lambda_{m}^{-1}\paren{I}}{\abs{\varphi_{m,j}\paren{\xi}}^{2}d\xi}
}}.
\end{equation}
Remember that $\varphi_{m,j}$ is localized in $\lambda_{m}^{-1}\paren{I}$ (see the embedding \eqref{caract_proj_spec_fct_band}). Therefore, 
\begin{equation*}
\int_{\lambda_{m}^{-1}\paren{I}}{\abs{\varphi_{m,j}\paren{\xi}}^{2}d\xi}=
\int_{\mathbb{R}}{\abs{\varphi_{m,j}\paren{\xi}}^{2}d\xi}=
\norme{\varphi_{m,j}}^{2}_{2}.
\end{equation*}
Hence according to the Parseval's identity \eqref{pars_id}, 
\begin{equation}
\label{app_par_id}
\sum_{m=0}^{M}{\sum_{j=1}^{N_{m}}{
\int_{\lambda_{m}^{-1}\paren{I}}{\abs{\varphi_{m,j}\paren{\xi}}^{2}d\xi}
}}=\norme{\varphi}_{2}^{2}.
\end{equation}
We combine it with the estimate \eqref{est_courr} that provides the first statment of the Theorem.\par
\textbf{Proof of the second part. } 
Let $\varphi\in X_{I,M}^{+}$. 
We prove in the same way as for the first part that 
\begin{equation*}
\scalaire{J\varphi}{\varphi}=
\sum_{m=M+1}^{+\infty}{\sum_{j=1}^{N_{m}}{
\int_{\lambda_{m}^{-1}\paren{I}}{\lambda_{m}'\paren{\xi}\abs{\varphi_{m,j}\paren{\xi}}^{2}d\xi}
}}.
\end{equation*}\par
Therefore, according to Theorem \ref{theo_comp_asym_der}, there exist $M_{0}\geqslant 0$ and $C_{+}>0$ such that for every $M\geqslant M_{0}$, 
\begin{equation}
\label{est_courr_2}
\abs{\scalaire{J\varphi}{\varphi}}\leqslant
\sum_{m=M+1}^{+\infty}{\sum_{j=1}^{N_{m}}{
\frac{C_{+}}{\sqrt{k_{m}}}
\int_{\lambda_{m}^{-1}\paren{I}}{\abs{\varphi_{m,j}\paren{\xi}}^{2}d\xi}
}}.
\end{equation}
Observe that for $m\geqslant M+1$, $C_{+}k^{-1/2}_{m}\leqslant C_{+}k^{-1/2}_{M+1}$. 
We combine it with the estimate \eqref{est_courr_2} and with the Parseval's identity \eqref{pars_id} that yields
\begin{equation*}
\abs{\scalaire{J\varphi}{\varphi}}\leqslant
\frac{C_{+}}{\sqrt{k_{M+1}}}\norme{\varphi}_{2}^{2}.
\end{equation*}
\end{proof}
\begin{remarque}
\label{rem_faibl_courr}
Remember that $k_{M}\to+\infty$ as $M\to+\infty$. According to Theorem \ref{theo_trans_courr}, 
for every $\varepsilon>0$ and for any bounded energy interval $I\subset\sigma\paren{H}$, 
there is some quantum state $\varphi_{\varepsilon}\in\text{Ran}\paren{\mathbb{P}_{I}}$ 
such that $\norme{\varphi_{\varepsilon}}=1$ and $\abs{\scalaire{J\varphi_{\varepsilon}}{\varphi_{\varepsilon}}}\leqslant\epsilon$, even if $\overline{I}$ is away from the Landau levels.\par
\end{remarque}

\normalsize{}
\bibliographystyle{alpha}
\addcontentsline{toc}{section}{\bibname}
\bibliography{biblio}

\begin{thebibliography}{AANS98}

\bibitem[AANS98]{Akk98}
E.~Akkermans, J.~E. Avron, R.~Narevich, and R.~Seiler.
\newblock Boundary conditions for bulk and edge states in quantum hall systems.
\newblock {\em Eur. Phys. J. B}, 1(1):117--121, 1998.

\bibitem[Agm82]{Agm82}
S.~Agmon.
\newblock {\em Lectures on Exponential Decay of Solutions of Second-Order
  Elliptic Equations: Bounds on Eigenfunctions of N-Body Schrodinger
  Operations. (MN-29)}.
\newblock Princeton University Press, 1982.

\bibitem[BHRS09]{Bri09}
P.~Briet, P.~D. Hislop, G.~D. Raikov, and \'{E}. Soccorsi.
\newblock Mourre estimates for a 2{D} magnetic quantum {H}amiltonian on
  strip-like domains.
\newblock In {\em Spectral and scattering theory for quantum magnetic systems},
  volume 500 of {\em Contemp. Math.}, pages 33--46. Amer. Math. Soc.,
  Providence, RI, 2009.

\bibitem[BMR14]{Bru14}
V.~Bruneau, P.~Miranda, and G.~D. Raikov.
\newblock Dirichlet and {N}eumann eigenvalues for half-plane magnetic
  {H}amiltonians.
\newblock {\em Rev. Math. Phys.}, 26(2):1450003, 23, 2014.

\bibitem[BP15]{Bru15}
V.~Bruneau and N.~Popoff.
\newblock On the ground state energy of the {L}aplacian with a magnetic field
  created by a rectilinear current.
\newblock {\em J. Funct. Anal.}, 268(5):1277--1307, 2015.

\bibitem[DP99]{Deb99}
S.~De{~}Bi{\`e}vre and J.~V. Pul{\'e}.
\newblock Propagating edge states for a magnetic {H}amiltonian.
\newblock {\em Math. Phys. Electron. J.}, 5:Paper 3, 17, 1999.

\bibitem[DS99]{Dim99}
M.~Dimassi and J.~Sj\"ostrand.
\newblock {\em Spectral asymptotics in the semi-classical limit}, volume 268 of
  {\em London Mathematical Society Lecture Note Series}.
\newblock Cambridge University Press, Cambridge, 1999.

\bibitem[EJK99]{Exn99}
P.~Exner, A.~Joye, and H.~Kova{\v r}{\i}k.
\newblock {Edge currents in the absence of edges}.
\newblock {\em {Physics Letters A}}, 264(2-3):124--130, December 1999.

\bibitem[Ens83]{Ens83}
V.~Enss.
\newblock Asymptotic observables on scattering states.
\newblock {\em Comm. Math. Phys.}, 89(2):245--268, 1983.

\bibitem[FGW00]{Fro00}
J.~Fr{\"o}hlich, G.~M. Graf, and J.~Walcher.
\newblock On the extended nature of edge states of quantum hall hamiltonians.
\newblock 1(3):405--442, 2000.

\bibitem[GN98]{Ger98a}
C.~G{\'e}rard and F.~Nier.
\newblock The {M}ourre theory for analytically fibered operators.
\newblock {\em J. Funct. Anal.}, 152(1):202--219, 1998.

\bibitem[GS97]{Geu97}
V.~A. Ge{\u\i}ler and M.~M. Senatorov.
\newblock The structure of the spectrum of the {S}chr{\"o}dinger operator with
  a magnetic field in a strip, and finite-gap potentials.
\newblock {\em Mat. Sb.}, 188(5):21--32, 1997.

\bibitem[Hal82]{Hal82}
B.~I. Halperin.
\newblock Quantized hall conductance, current-carrying edge states, and the
  existence of extended states in a two-dimensional disordered potential.
\newblock {\em Phys. Rev. B}, 25:2185--2190, Feb 1982.

\bibitem[Hel88]{Hel88}
B.~Helffer.
\newblock {\em Semi-classical analysis for the {S}chr{\"o}dinger operator and
  applications}, volume 1336 of {\em Lecture Notes in Mathematics}.
\newblock Springer-Verlag, Berlin, 1988.

\bibitem[HM96]{Hel96}
B.~Helffer and A.~Mohamed.
\newblock Semiclassical analysis for the ground state energy of a
  {S}chr\"odinger operator with magnetic wells.
\newblock {\em J. Funct. Anal.}, 138(1):40--81, 1996.

\bibitem[HM01]{Hel01}
B.~Helffer and A.~Morame.
\newblock Magnetic bottles in connection with superconductivity.
\newblock {\em J. Funct. Anal.}, 185(2):604--680, 2001.

\bibitem[HPS16]{His16}
P.~D. Hislop, N.~Popoff, and \'{E}. Soccorsi.
\newblock Characterization of bulk states in one-edge quantum {H}all systems.
\newblock {\em Ann. Henri Poincar{\'e}}, 17(1):37--62, 2016.

\bibitem[HS02]{Hor02}
K.~Hornberger and U.~Smilansky.
\newblock Magnetic edge states.
\newblock {\em Physics Reports}, 367(4):249 -- 385, 2002.

\bibitem[HS08]{His08}
P.~D. Hislop and \'{E}. Soccorsi.
\newblock Edge currents for quantum hall systems i: One-edge, unbounded
  geometries.
\newblock {\em Reviews in Mathematical Physics}, 20(01):71--115, 2008.

\bibitem[HS15]{His15}
P.~D. Hislop and \'{E}. Soccorsi.
\newblock Edge states induced by {I}watsuka {H}amiltonians with positive
  magnetic fields.
\newblock {\em J. Math. Anal. Appl.}, 422(1):594--624, 2015.

\bibitem[Ivr18]{Ivrii}
V.~Ivrii.
\newblock {\em {M}icrolocal Analysis, Sharp Spectral Asymptotics and
  Applications}.
\newblock Available online on the author's website, 2018.

\bibitem[Iwa85]{Iwa85}
A.~Iwatsuka.
\newblock Examples of absolutely continuous {S}chr{\"o}dinger operators in
  magnetic fields.
\newblock {\em Publications of the Research Institute for Mathematical
  Sciences}, 21(2):385--401, 1985.

\bibitem[Kat66]{Kat66}
T.~Kato.
\newblock {\em Perturbation theory for linear operators}.
\newblock Die Grundlehren der mathematischen Wissenschaften, Band 132.
  Springer-Verlag New York, Inc., New York, 1966.

\bibitem[LL77]{Lan77}
L.D. Landau and E.M. Lifshitz.
\newblock {\em Quantum Mechanics (Third Edition, Revised and Enlarged)}.
\newblock Pergamon, third edition, revised and enlarged edition, 1977.

\bibitem[MP97]{Man97}
M.~M{\^a}ntoiu and R.~Purice.
\newblock Some propagation properties of the {I}watsuka model.
\newblock {\em Communications in Mathematical Physics}, 188(3):691--708, Oct
  1997.

\bibitem[MR88]{Mou88}
E.H.~I. Mourad and Z.~Ruiming.
\newblock On the {H}ellmann-{F}eynman theorem and the variation of zeros of
  certain special functions.
\newblock {\em Advances in Applied Mathematics}, 9(4):439 -- 446, 1988.

\bibitem[Olv97]{Olv97}
F.~W.~J. Olver.
\newblock {\em Asymptotics and special functions}.
\newblock AKP Classics. A K Peters, Ltd., Wellesley, MA, 1997.
\newblock Reprint of the 1974 original [Academic Press, New York; MR0435697 (55
  \#8655)].

\bibitem[Pop12]{Pop12}
N.~Popoff.
\newblock {\em {On the spectrum of the magnetic Schr{\"o}dinger operator in a
  dihedral domain}}.
\newblock Theses, {Universit{\'e} Rennes 1}, November 2012.

\bibitem[Pop15]{Pop15}
N.~Popoff.
\newblock The model magnetic {L}aplacian on wedges.
\newblock {\em J. Spectr. Theory}, 5(3):617--661, 2015.

\bibitem[PS16]{Pop16}
N.~Popoff and \'{E}. Soccorsi.
\newblock Limiting absorption principle for the magnetic {D}irichlet
  {L}aplacian in a half-plane.
\newblock {\em Comm. Partial Differential Equations}, 41(6):879--893, 2016.

\bibitem[Rai08]{Rai08}
G.~D. Raikov.
\newblock On the spectrum of a translationally invariant {P}auli operator.
\newblock In {\em Spectral theory of differential operators}, volume 225 of
  {\em Amer. Math. Soc. Transl. Ser. 2}, pages 157--167. Amer. Math. Soc.,
  Providence, RI, 2008.

\bibitem[RD01]{Dim01}
G.~D. Raikov and M.~Dimassi.
\newblock Spectral asymptotics for quantum {H}amiltonians in strong magnetic
  fields.
\newblock {\em Cubo Mat. Educ.}, 3(2):317--391, 2001.

\bibitem[RS78]{Ree78d}
M.~Reed and B.~Simon.
\newblock {\em Methods of modern mathematical physics. {IV}. {A}nalysis of
  operators}.
\newblock Academic Press [Harcourt Brace Jovanovich, Publishers], New
  York-London, 1978.

\bibitem[Shn57]{Shn57}
I.E. Shnol'.
\newblock {On the behaviour of the eigenfunctions of Schr{\"o}dinger's
  equation.}
\newblock {\em {Mat. Sb., Nov. Ser.}}, 42:273--286, 1957.
\newblock (in Russian).

\bibitem[Soc01]{Soc01}
\'{E}. Soccorsi.
\newblock Analyticity and asymptotic properties of the {M}axwell operator's
  dispersion curves.
\newblock {\em Math. Models Methods Appl. Sci.}, 11(3):387--406, 2001.

\bibitem[Yaf03]{Yaf03}
D.~Yafaev.
\newblock A particle in a magnetic field of an infinite rectilinear current.
\newblock {\em Math. Phys. Anal. Geom.}, 6(3):219--230, 2003.

\bibitem[Yaf08]{Yaf08}
D.~Yafaev.
\newblock On spectral properties of translationally invariant magnetic
  {S}chr{\"o}dinger operators.
\newblock {\em Annales Henri Poincar{\'e}}, 9(1):181--207, Feb 2008.

\end{thebibliography}
\end{document}